\theoremstyle{plain}                                                           
\newtheorem{thm}{Theorem}[section]
\newtheorem{prop}[thm]{Proposition}
\theoremstyle{definition}
\newtheorem{ex}[thm]{Example}
\newtheorem{rem}[thm]{Remark}
\newtheorem{defn}[thm]{Definition}
\newtheorem{notation}[thm]{Notation}
\DeclareMathOperator{\id}{id}
\DeclareMathOperator{\Hom}{Hom}
\DeclareMathOperator{\Spec}{Spec}
\DeclareMathOperator{\Aut}{Aut}
\DeclareMathOperator{\Ind}{Ind}
\newcommand{\field}[1]{\ensuremath{\mathbf{#1}}}
\newcommand{\R}{\ensuremath{\field{R}}}        
\newcommand{\Q}{\ensuremath{\field{Q}}}        
\newcommand{\C}{\ensuremath{\field{C}}}
\newcommand{\sym}{\ensuremath{\mathbb{S}}}
\newcommand{\Z}{\ensuremath{\field{Z}}}
\newcommand{\A}{\mathcal{A}}
\newcommand{\M}{\mathcal{M}}
\newcommand{\MM}{\overline{\mathcal{M}}}
\title{On the operad structure of admissible $G$-covers}
\author{Dan Petersen}\thanks{The author is supported by the G\"oran Gustafsson Foundation for Research in Natural Sciences and Medicine. }
\renewcommand{\sym}{\mathbb{S}}
\newcommand{\G}{\L G}
\renewcommand{\C}{\mathcal{C}}
\DeclareMathOperator*{\colim}{colim}
\DeclareMathOperator{\Iso}{Iso}
\DeclareMathOperator{\End}{End}
\renewcommand{\field}{\mathbb}
\newcommand{\SO}{\mathrm{SO}}
\newcommand{\V}{V}
\newcommand{\W}{W}
\newcommand{\fD}{f\mathcal{D}}
\newcommand{\graph}[1]{\mathbf{#1}}
\newcommand{\D}{\graph{di}}
\renewcommand{\SS}{\graph{S}}
\newcommand{\E}{\mathcal{E}}
\renewcommand{\M}{\mathbb{M}}
\newcommand{\ob}{\mathrm{ob}}
\renewcommand{\H}{\mathcal{H}}
\begin{document} 
  
 \maketitle

\begin{abstract} We describe the modular operad structure on the moduli spaces of pointed stable curves equipped with an admissible $G$-cover. To do this we are forced to introduce the notion of an operad colored not by a set but by the objects of a category. This construction interpolates in a sense between `framed' and `colored' versions of operads; we hope that it will be of independent interest. An algebra over this operad is the same thing as a $G$-equivariant CohFT, as defined by Jarvis, Kaufmann and Kimura. We prove that the (orbifold) Gromov--Witten invariants of global quotients $[X/G]$ give examples of $G$-CohFTs. 
\end{abstract}


\section{Introduction}
The notion of a cohomological field theory (CohFT) was introduced by Kontsevich and Manin \cite{kontsevichmanin} as a simpler algebro-geometric relative of the notion of a $(1+1)$-dimensional topological conformal field theory, where holomorphic holes have been replaced with marked points (so one gets a theory modeled on gluing of compact Riemann surfaces along markings) and singular chains on moduli space have been replaced by homology. One can give a succinct definition of a CohFT in the language of modular operads \cite{getzlerkapranov}: a CohFT is nothing but an algebra over the modular operad $H_\bullet(\MM_{g,n},\Z)$. The main examples of CohFTs are the Gromov--Witten invariants of smooth projective varieties \cite{behrendmanin,gwinag,normalcone}. 


Jarvis, Kaufmann and Kimura \cite{jkk} defined a generalization called a $G$-CohFT, where $G$ is a finite group. Here one glues instead marked Riemann surfaces $C$ equipped with a branched covering $P \to C$ which forms a $G$-torsor away from the markings. The gluing rules need to be slightly modified: firstly because one needs a marked point on $P$ over each marked point on $C$ in order that the gluing is independent of choices, secondly because one needs to impose the condition that the monodromies around the respective markings should be inverse to each other. In algebraic language, going from CohFTs to $G$-CohFTs corresponds to going from $\MM_{g,n}$ to spaces $\MM_{g,n}^G$ of \emph{admissible $G$-covers}. One expects the main source of $G$-CohFTs to be the Gromov--Witten invariants of a global quotient $[X/G]$ (in the sense of orbifolds or stacks) of a smooth projective variety by a finite group \cite{chenruan, agv08}. Similar ideas can be found in a letter from Kontsevich to Borisov from 1996, published in \cite{abramovich08}. 


Analogous constructions have existed for a longer time in the physics literature, arising from {Chern--Simons theory with a finite gauge group}, see e.g.\ \cite{dijkgraafwitten,freed}. Also closely related is Turaev's notion of a {homotopy quantum field theory} \cite{hqft}, which is a TQFT where all spaces and cobordisms are equipped with a map up to homotopy to a fixed target space $X$. Taking $X$ a $K(G,1)$ shows the similarity with $G$-CohFTs. 

The definition of a $G$-CohFT in \cite{jkk} is unsatisfactory in one minor respect. A $G$-CohFT is defined by a list of axioms, but just as for ordinary CohFTs one would expect it to be possible to bundle together these axioms by stating that a $G$-CohFT is an algebra over a certain operad. And it is clear from the definition that a $G$-CohFT is an algebra over \emph{something}, it is just not clear in what sense the spaces $\MM_{g,n}^G$ form an operad. 

We claim that the correct definition is that $\{\MM_{g,n}^G\}$ forms a modular operad colored by a \emph{category}. The category in question is the action groupoid of $G$ acting on itself by conjugation, the so-called {loop groupoid} of the group $G$. Moreover, this groupoid carries an involution given by ``changing orientation of the loop'', which corresponds to inversion in the group, and the gluing rules need to be modified in order to accommodate this involution. 

Let us finally give a brief outline of the article. Section \ref{background} describes the moduli spaces of admissible covers and their stratifications from an operadic point of view. Section \ref{operads} contains a formal definition of a colored modular operad where the colors form a category (with duality). We have not seen this defined in the literature. Although it is quite easy to define what this should mean for an ordinary operad, it is a bit subtle to come up with the `right' definition when one considers structures defined by more general graphs than trees (that is, cyclic, wheeled, modular, etc., versions of operads). 

After this we explain in Section \ref{cohfts} how the work of Jarvis, Kaufmann and Kimura fits into this framework. We prove a result left open in their article, that the Gromov--Witten invariants of a global quotient $[X/G]$ endow the ring $H^\bullet(X,G)$ of Fantechi and G\"ottsche with the structure of a $G$-CohFT. 

In a sequel to this paper, we will extend the formalism of symmetric functions to this setting, and prove an analogue of Getzler and Kapranov's formula \cite{getzlerkapranov} for the effect of the `free modular operad' functor on the level of symmetric functions. 

\section{Background}
\label{background}

Consider first the topological version of the story: let $G$ be a (finite) group, and consider a variant of 2-dimensional TQFT modeled on sewing of compact oriented surfaces with boundary, equipped with a $G$-bundle. Then there is a basic compatibility condition needed in the definition of the sewing: for each boundary component, we get a $G$-bundle on $S^1$, and to glue surfaces we need an isomorphism between these $G$-bundles. 

In the algebraic version, there is no analogue of gluing surfaces with boundary, and one is forced to work with punctured or marked surfaces. Since the $G$-cover will not in general extend across the punctures, one is moreover forced to work with ramified covers instead. 

\begin{defn}\label{adm}Let $G$ be a finite group, and $C$ an $n$-pointed nodal curve. An \emph{admissible $G$-cover} is a covering $\pi \colon P \to C$ and a $G$-action on $P$, such that:
\begin{enumerate}
\item the quotient $P/G$ is identified with $C$ via $\pi$; 
\item the map $\pi$ is a $G$-torsor away from the nodes and markings;
\item if $x \in P$ is a node, then the stabilizer $G_x$ acts on the tangent spaces of the two branches at $x$ by characters which are inverses of each other. 
\end{enumerate}\end{defn}

Condition (3) is the algebraic analogue of the sewing condition in the topological setting. Suppose we are given two Riemann surfaces $C$ and $C'$ with marked points $y$ and $y'$. Let $\overline C$ be the nodal surface obtained by gluing $y$ and $y'$. Let $P \to C\setminus \{y\}$ and $P'\to C'\setminus \{y'\}$ be $G$-torsors. These extend uniquely to ramified covers of $C$ and $C'$, and by choosing points $x$, $x'$ in the fibers over $y$ and $y'$ they can be glued together to a covering $\overline P \to \overline C$ whenever the isotropy groups $G_x$ and $G_{x'}$ coincide. But in general the resulting covering will not be smoothable, in the sense that there is no family of $G$-covers $P_t \to C_t$ of \emph{smooth} curves, such that the limit as $t\to 0$ of this family is $\overline P \to \overline C$. Clearly, the topological obstruction to such a smoothing is that the monodromies of $P \to C\setminus \{y\}$ and  $P'\to C'\setminus \{y'\}$, computed with respect to $x$ and $x'$, are inverse to each other in $G$. This final condition is equivalent to condition (3), which however makes sense over an arbitrary base field. Nevertheless, we shall stick to the language of Riemann surfaces in this article.

Though the notion of an admissible cover predates their work (admissible covers traditionally arise when one tries to compactify moduli spaces of unramified covers, cf.\ \cite{prymschottky,harrismumford1}), Definition \ref{adm} was first written down in this form in \cite{acv03}. (They call coverings satisfying (3) \emph{balanced}. We omit this adjective, as there will be no need for unbalanced coverings.) They also construct a moduli space for such covers. This theory arises from Abramovich, Vistoli and their coauthors' work on defining Gromov-Witten invariants of stacks: it is the special case of stable maps where the target space is the stack $BG$. 

\begin{defn}We denote by $\MM_{g,n}^G$ the moduli stack parametrizing admissible $G$-covers $P \to C$ where $C$ is a stable $n$-pointed curve of genus $g$, together with a choice of a point $x_i \in P$ over every marked point $y_i \in C$.   \end{defn}

That we include liftings $x_i$ of the points $y_i$ is crucial in order for there to be a natural operad structure. 

\subsection{The operadic structure}

The spaces $\MM_{g,n}^G$ admit a kind of stratification by topological type, analogous to that of $\MM_{g,n}$. To an admissible cover $P \to C$ we associate a stable graph, namely the dual graph of $C$. The choice of a point in the fiber over each marking on $C$ produces extra structure on this graph: by considering the monodromy of the covering over each marked point, we find that the legs of the graph are decorated by elements of $G$. Condition (3) above implies that the spaces $\MM_{g,n}^G$ have partially defined analogues of the gluing maps for $\MM_{g,n}$: one can glue together two legs precisely when they have mutually inverse decorations. So it would seem that they form a kind of colored operad where there is an involution on the collection of colors. 

However, there is further structure present: the wreath product $G \wr \sym_n$ acts on $\MM_{g,n}^G$, where $\sym_n$ acts by permuting the markings and each copy of $G$ acts by changing the choice of the lifted point $x_i \in P$. Changing the point $x_i$ to $g \cdot x_i$ has the effect of changing the monodromy by conjugation with $g$. Hence $G$ acts both on the spaces involved and on the set of colors (by conjugation), and the gluing maps are equivariant for this $G$-action. 

Moreover, since there are no distinguished points in $P$ in the fibers over the nodes of $C$, we see that gluing two points together also involves simultaneously forgetting the choices of liftings over the two markings, i.e.\ quotienting by a diagonal action of $G$ acting on both markings that are being glued together. It is instructive to compare this to the framed little disks operad, which parametrizes little disks equipped with a marked point on their boundaries, and gluing involves forgetting about this marked point. 

We claim that the correct formalism for describing all this data --- the presence of a coloring, the fact that gluing means simultaneously quotienting by the action of a group acting ``on the legs'', and compatibility with the action of the group on the set of colors --- is the following: the spaces $\MM_{g,n}^G$ form a colored operad where the colors are the objects of the action groupoid $[G/G]$ in which $G$ acts on its underlying set by conjugation. Finally there is the condition of inverse monodromy, which is now most easily described as an involution of this groupoid.

\subsection{The loop groupoid}

The appearance of the groupoid $[G/G]$ is not a coincidence. For one thing, it turns out that an algebra over a $\C$-colored operad needs in particular to be a representation of $\C$. Moreover, a representation of $[G/G]$ is exactly the same as a module over the Drinfel'd (quantum) double of the group $G$. This module structure is well known in Dijkgraaf--Witten theory, cf.\ \cite{dpr, freed}. Although we will not use anything from this subsection in the rest of the paper, it seems worth giving some context.


\newcommand{\BG}{\mathbf{B}G}
\newcommand{\B}{\mathbf{B}}
\newcommand{\Cat}{\mathbf{Cat}}

\renewcommand{\L}{\mathscr{L}}

\begin{defn}Let $G$ be a group. We denote by $\L G$ the action groupoid of $G$ acting on its underlying set by conjugation, and call this the \emph{loop groupoid} of $G$. \end{defn}

\begin{rem}The groupoid $\L G$ can equivalently (and more generally) be described as the functor category $\mathrm{Fun}(\Z ,G)$, where $\Z$ and $G$ are considered as $1$-object categories. Since $|\Z| \simeq S^1$, where $|\ast|$ denotes geometric realization, this explains the terminology. \end{rem}

\begin{rem}\label{topology}One can show that for any two groupoids $\mathcal G$ and $\mathcal H$, there is a homotopy equivalence \[ |\mathrm{Fun}(\mathcal H,\mathcal G)| \simeq \mathrm{map}(|\mathcal H|,|\mathcal G|),\]
see for instance \cite{strickland}. In particular, $|\L G|$ is the space $LBG$ of free loops on the classifying space $BG$. Another way to think about this is that $\L G$ is isomorphic to the groupoid of $\mathbb C$-points of the inertia stack of $BG$, cf.\ e.g.\ \cite[Section 5]{abramovich08}. The relationship between these viewpoints is that the inertia stack $I(\mathcal X)$ is in general defined as the fiber product $\mathcal X \times_{\mathcal X \times \mathcal X} \mathcal X$. On the other hand, $LX$ is given by the homotopy pullback $X \times^h_{X\times X} X$, for any space $X$. 

In any case, this leads to a geometrically appealing situation. We are trying to combinatorially model gluing of surfaces equipped with $G$-torsors. In the topological setting, we needed for any two boundary circles an isomorphism between the respective $G$-bundles, which are (up to homotopy) points of $LBG$. Now we replace surfaces with their dual graphs, and find that we must decorate legs by $\L G$, which is a combinatorial model of $LBG$. 
 \end{rem}

The involution on $\L G$ is now defined by the obvious action of $\{\pm 1 \} = \Aut \Z$ on $\mathrm{Fun}(\Z,G)$. Topologically, inverting the generator of $\Z$ corresponds to changing orientation of the circle, in agreement with the geometric picture. 

\begin{defn}Let $\C$ be a groupoid and $k$ a field. We define the \emph{groupoid algebra} $k[\C]$ to be the $k$-algebra which is spanned as a vector space by the morphisms in $\C$, and whose product is defined on generators by 
\[ f \ast g = \begin{cases} f \circ g & \text{if this composition makes sense,} \\ 0 & \text{otherwise.} \end{cases}\]
This is extended bilinearly.\end{defn}
Just as for finite groups, $k[\C]$ is naturally a Hopf algebra. If $G$ is a finite group, then $k[\L G]$ is exactly the Drinfel'd double of the usual group algebra $k[G]$.

\section{Operads colored by categories}\label{operads}

\newcommand{\op}{\mathrm{op}}

In this section we give the general definition of an operad-like structure colored by a category. By an operad-like structure we mean e.g. a cyclic or modular operad, a (wheeled) PROP, a properad, a dioperad, etc.

We begin by giving a direct definition of an ordinary operad colored by a category. For more general operad-like structures one needs some more careful combinatorics. In order to give a suitably general definition we define a category of graphs colored by some fixed category and construct the ``free operad'' functor combinatorially in terms of sums over such graphs. This functor is naturally a monad and one can then define an operad as an algebra over it. A pedagogical introduction to this point of view on operads and related structures can be found in \cite{markloperads}. 

An extra subtlety in the case of undirected graphs is that the colors should no longer just form a category, but they must be a \emph{category with duality}, i.e.\ a category $\C$ with an isomorphism $\C \cong \C^\op$ satisfying certain conditions. This is analogous to how any vector space can be an algebra over an operad, but only a vector space with an inner product can be an algebra over a cyclic operad. 

\subsection{The case of ordinary operads}

\begin{defn}\label{wreathdef}Suppose a finite group $G$ acts on a category $\C$. We define the \emph{semidirect product} $\C \rtimes G$ to be the category with the same objects as $\C$, and whose morphisms $x \to y$ are pairs $(\phi,g)$ where $g \in G$ and $\phi \in \Hom_{\C}(x,yg)$. The composition is defined by 
\[ (\phi,g) \circ (\psi,h) = ((\phi h) \circ \psi,g\cdot h).\]
\end{defn}

\begin{defn} The \emph{wreath product} $\C \wr \sym_n$ of a category with the symmetric group on $n$ letters is the semidirect product $\C^n \rtimes \sym_n$ with the obvious $\sym_n$-action. 
\end{defn}

For the remainder of this section, we fix a cocomplete symmetric monoidal category $\E$, and a small category $\C$. We shall consider operads colored by $\C$ taking values in $\E$.



\begin{defn}
A \emph{$\C\sym$-module} is a sequence $\V(n)$, $n \geq 0$, of functors  
\[ \V(n) \colon \C^\op \times (\C \wr \sym_n) \to \E. \]
\end{defn}

\begin{defn}
The \emph{tensor product} of two $\C\sym$-modules is defined by
\[ (\V\otimes \W)(n) = \coprod_{k+l=n} \Ind_{\C\wr\sym_k \times \C \wr \sym_l}^{\C\wr\sym_n} \V(k) \otimes \W(l).\]\end{defn}

By induction we mean here the left Kan extension along $\C\wr\sym_k \times \C \wr \sym_l \hookrightarrow \C\wr\sym_n$, which is the usual induction functor when $\C$ is a group. 

\begin{defn}\label{def1}
The \emph{plethysm} of two  $\C \sym$-modules is defined by the coend
\[ (\V\circ \W)(n) =  \coprod_{k\geq 0} \V(k) \otimes_{\C\wr\sym_k} \W^{\otimes k}(n)
\stackrel{\mathrm{def}}{=}  \coprod_{k\geq 0} \int^{\C\wr\sym_k} V(k) \otimes W^{\otimes k} (n) 
 \]
where $\W^{\otimes k}(n)$ is considered as a $\C^\op \wr \sym_k$-module by virtue of the fact that a $k$-fold tensor product of a representation of $\C^\op$ is a representation of $\C^\op \wr \sym_k$, using the symmetric monoidal structure on $\E$. 
\end{defn}

\begin{prop}The category of $\C\sym$-modules is monoidal with plethysm as product. \end{prop}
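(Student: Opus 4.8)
The plan is to establish the monoidal structure on $\C\sym$-modules with plethysm as product by exhibiting the unit and the associativity and unit isomorphisms, and then checking the pentagon and triangle coherence axioms. I would first identify the unit object $\mathbf{1}$: it should be the $\C\sym$-module concentrated in arity $1$, with $\mathbf{1}(1)\colon \C^\op\times(\C\wr\sym_1)\to\E$ given by the $\Hom$-functor of $\C$ (landing in $\E$ via the copower with the monoidal unit of $\E$), and $\mathbf{1}(n)=\varnothing$ (the initial object) for $n\neq 1$. One then checks directly from Definition \ref{def1} that $\mathbf{1}\circ\W\iso\W$ and $\V\circ\mathbf{1}\iso\V$; the left unit uses that the only nonzero term in the coproduct over $k$ is $k=1$ and that $\W^{\otimes 1}(n)=\W(n)$, while the right unit uses a co-Yoneda (density) argument to collapse the coend $\int^{\C\wr\sym_k}\V(k)\otimes\mathbf{1}^{\otimes k}(n)$, since $\mathbf{1}^{\otimes k}$ is, by the definition of $\otimes$ on $\C\sym$-modules, the representable $\C\wr\sym_k$-bimodule in arity $k$ (and initial in other arities).

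Next I would construct the associativity isomorphism $(\V\circ\W)\circ\X \iso \V\circ(\W\circ\X)$. Unwinding Definition \ref{def1} on both sides, each becomes a coproduct, indexed by a two-step branching datum, of iterated coends of expressions of the form $\V(k)\otimes\bigl(\bigotimes_{i=1}^k \W(l_i)\bigr)\otimes\bigl(\bigotimes_{j}\X(m_j)\bigr)(n)$, with the various $\C\wr\sym$-actions coequalized. The identification is the standard ``both sides compute the same sum over level-two trees'' argument familiar from the uncolored case (see \cite{markloperads}): one uses that induction $\Ind$ is a left adjoint (hence commutes with coends and coproducts), that $\W^{\otimes k}$ is built from the $\otimes$ of $\C\sym$-modules, and the Fubini theorem for coends to reorganize the iterated colimits. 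Compatibility with the $\C\wr\sym_n$-actions and with the $\C^\op$-variable must be tracked throughout, but it is automatic from functoriality of all the constructions involved.

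The main obstacle, and the part deserving genuine care rather than a routine citation, is the bookkeeping of the $\C$-colored actions in the associativity isomorphism: in the uncolored setting one is quotienting by symmetric groups only, whereas here one is taking coends over the categories $\C\wr\sym_k$, and one must verify that the $\C$-equivariance (the semidirect-product twisting in Definition \ref{wreathdef}, where composition involves the $G$-action $(\phi h)\circ\psi$) matches up correctly on the two sides after applying Fubini. Concretely, the subtlety is that $\W^{\otimes k}(n)$ carries a $\C^\op\wr\sym_k$-action assembled from the symmetric monoidal structure on $\E$, and one must check this interacts correctly with the outer coend over $\C\wr\sym_k$ appearing in $\V(k)\otimes_{\C\wr\sym_k}(-)$; once this is confirmed, the pentagon axiom follows because both routes around it reduce, via the same Fubini manipulations, to the canonical reassociation of a triple iterated coend, and the triangle axiom follows from the explicit description of the unit. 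I would therefore spend most of the proof carefully writing out the arity-$n$ component of $(\V\circ\W)\circ\X$ and of $\V\circ(\W\circ\X)$ as colimits over an explicit diagram indexed by colored two-level trees, and then observe that the evident bijection of indexing data induces the required isomorphism, with coherence inherited from coherence of coproducts and coends in $\E$.
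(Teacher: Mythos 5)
Your proposal is correct and follows essentially the same route as the paper: you take the same unit object (the $\Hom$-functor in arity one, formed as a copower of the monoidal unit of $\E$), verify the unit laws via the co-Yoneda lemma, and obtain associativity from the interchange of coends and coproducts (Fubini for coends), which is exactly the paper's argument. Your extra bookkeeping of the $\C\wr\sym_k$-actions and the coherence axioms simply fills in details the paper leaves to the reader; note only that the left unit law also needs the co-Yoneda collapse of $e(1)\otimes_{\C}\W(n)$, not just the observation that only $k=1$ contributes.
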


\begin{proof}Let $e$ be the $\C\sym$-module concentrated in degree one, where it is given by the composition
\[ \C^\op \times \C \stackrel{\mathrm{Hom}(-,-)}{\longrightarrow} \mathbf{Set} \stackrel{\phi}{\to} \E,\]
where $\phi(X) = \coprod_{x\in X} \mathbf 1$, with $\mathbf 1$ the monoidal identity in $\E$. In other words, we are forming the copower $\Hom(-,-) \odot \mathbf 1$. Then $e$ is both a left and right unit for plethysm, as one verifies using the canonical isomorphism (the ``co-Yoneda lemma'')
\[ F(x) = \int^{\mathcal C}\Hom_{\mathcal C}(-,x) \odot F(-)\]
for any functor $F$ defined on a category $\mathcal C$. Associativity is immediate from the fact that coproducts and coends can be freely commuted past each other, both being colimits. \end{proof}

\begin{ex}If $\C=G$ is a group and $\E = \text{$R$-Mod}$, then $e(1)$ is given by the group ring $R[G]$, considered as a left and right $G$-module. \end{ex}


\begin{defn}\label{defold}A \emph{$\C$-operad} is a monoid in the monoidal category of $\C \sym$-modules. \end{defn}

\begin{ex}Let $\C = X$ be a set, thought of as a discrete category. An $X$-operad is the same thing as an operad colored by the set $X$.\end{ex}

 \begin{ex}\label{fd} Let $\C = G$ be a group. A natural example here is the \emph{framed little disks operad} of \cite{getzlerbv}, for $G = \mathrm{SO}(N)$, which we claim can be thought of as a colored operad which has only one color, but where this color has a nontrivial automorphism group. 
 
Let $D_N$ be the closed unit disk in $\R^{N}$. Let $\fD_N(n)$ be the topological space parametrizing maps
\[ \coprod_{i=1}^n D_N \hookrightarrow D_N\]
where each factor is a composition of rotations, translations and positive dilations, and the images are disjoint. 
Then $\{\fD_N(n)\}$ is an $\SO(N)$-operad in $\mathbf{Spaces}$, 
with edge contractions defined by composing embeddings with each other. In particular the space $\fD_N(n)$ has an action of
\[ \SO(N)^\op \times (\SO(N)^n \rtimes \sym_n).\]
We define this action by letting the first factor act by rotating the entire disk, and the second factor act by rotations and permutations of the individual embedded disks. The gluing maps are $\SO(N)$-equivariant as required, in the sense that any gluing map is invariant under the simultaneous action of $\SO(N)$ on the input and output legs that are being glued together. 

More generally, any semidirect product operad $\mathcal{P} \rtimes G$ in the sense of \cite{salvatorewahl} is an example of a $G$-operad in our sense. The notion of a $G$-operad is, however, more general. (Note that there is an unfortunate clash of notation:  \cite{salvatorewahl} use the word $G$-operad to mean an operad in the category of spaces with a $G$-action.) \end{ex}

\begin{rem}The preceding example also demonstrates that one should really be working throughout in an enriched setting, although we have not done so for readability's sake. Indeed, we do not want to think of $\SO(N)$ as just a group, but a topological group, and we want its actions on spaces to be continuous. One should therefore consider categories enriched over some closed symmetric monoidal category $\mathcal V$ (in the preceding example, $\mathcal V = \mathbf{Spaces}$): $\E$ is a $\mathcal V$-cocomplete symmetric monoidal $\mathcal V$-category, $\C$ is a small $\mathcal V$-category, and we are given a $\mathcal V$-functor from $\C^\op \times \C \wr \sym_n$ to $\E$. All coends, copowers, Kan extensions, etc.\ need to be replaced with their $\mathcal V$-analogues. We leave the details to the reader. \end{rem}

\begin{rem}The author does not know a natural example of an operad colored by a category where that category is not in fact a groupoid. Such an example would perhaps be interesting. \end{rem}

\subsection{Categories with duality}

\renewcommand{\C}{\mathcal{C}}
\renewcommand{\D}{\mathcal{D}}

\begin{defn} A \emph{category with duality} is a category $\C$ equipped with a contravariant functor $\vee \colon \C \to \C$, 
and a natural isomorphism
\[ \eta \colon \id_\C \to \vee \circ \vee, \]
such that
the composition
\[  \vee \stackrel {\eta \vee} \longrightarrow \vee \circ \vee \circ \vee \stackrel{\vee\eta}{\longrightarrow} \vee\]
is the identity.
\end{defn}

\begin{rem}To make sense of the last equation in the preceding definition, recall that if $\epsilon \colon F \to G$ is a natural transformation, and $H$ is a contravariant functor, then the horizontal composition has reversed direction: one has $H\epsilon  \colon H G \to H F$.\end{rem}

We write $x^\vee$ rather than $\vee(x)$, where $x$ is either an object or a morphism in $\C$. 
An equivalent, more symmetric, definition is the following:

\begin{defn} A \emph{category with duality} is a category $\C$ equipped with a functor $\vee \colon \C \to \C^\op$, such that $\vee$ and $\vee^\op$ are quasi-inverses, and the  resulting counit and unit $\vee^\op \vee \to \id_\C$ and $\id_{\C^\op} \to \vee \vee^\op$ are opposites of each other. \end{defn}

\begin{ex}The category of finitely generated projective modules over a ring $A$ becomes a category with duality if we define $M^\vee = \Hom(M,A)$. More generally, any compact closed category is a category with duality. \end{ex}

\begin{ex}\label{groupex}Any groupoid is a category with duality, with $\vee$ the identity on objects and $g^\vee = g^{-1}$ on morphisms. \end{ex}

\begin{ex}A discrete category with duality is a set with an involution.\end{ex}

\begin{defn}A \emph{pairing} between two objects $x$ and $y$ of a category with duality is a morphism $\phi \colon x \to y^\vee$. (Equivalently, it is a morphism $y \to x^\vee$.) \end{defn}

\begin{defn}A pairing between $x$ and itself is said to be \emph{symmetric} if $\phi^\vee \circ \eta_x = \phi$. 
\end{defn}

\begin{ex}In the category of finitely generated projective $A$-modules, a pairing between $M$ and $N$ is a map $M \otimes N \to A$, and a symmetric pairing is a symmetric bilinear form.\end{ex}

If $\C$ and $\D$ are categories with duality, then so is the functor category $[\C,\D]$: if $F \colon \C \to \D$ is a functor, its dual is defined as $\vee_\D \circ F \circ \vee_\C$. 

\begin{defn}A \emph{weak symmetric functor} $\C \to \D$ is a functor $F$ in $[\C,\D]$ with a symmetric pairing.
\end{defn}

Explicitly, this means we have a functor $F \colon \C \to \D$ and a natural transformation 
\[ \rho\colon F \circ \vee_\C \to   \vee_\D \circ F   \]
such that the diagram
\[ \begin{diagram}
 F \circ \vee_\C  & \rTo^{\rho} & \vee_\D \circ F \\
 \dTo_{\eta_\D}  & & \uTo^{\eta_\C} \\ 
 \vee_\D \circ \vee_\D \circ F \circ \vee_\C & \rTo_\rho &   \vee_\D \circ F \circ \vee_\C \circ \vee_\C
 \end{diagram} \]
commutes. If $\rho$ is an isomorphism, then $F$ is \emph{strong symmetric}. 

\begin{ex}A weak symmetric functor from the one-object one-morphism category into $\C$ is an object of $\C$ with a symmetric pairing. \end{ex}

\begin{ex}The category $\mathbf{fdHilb}$ is naturally a category with duality, with $\vee$ the identity on objects and $T^\vee$ the adjoint of $T$. Let $G$ be a group, considered as a category with duality as in Example \ref{groupex}. A (weak or strong) symmetric functor $G \to \mathbf{fdHilb}$ is a unitary representation of $G$.  \end{ex}

\begin{ex}If $F$ is weak symmetric, then a pairing between $x$ and $y$ induces a pairing between $F(x)$ and $F(y)$.\end{ex}


\subsection{Graphs}

We shall follow the definitions and conventions of \cite{getzlerkapranov} regarding graphs, which we recall for the reader's convenience. A \emph{graph} $\Gamma$ is a finite set $F$ of \emph{flags}, a finite set $V$ of \emph{vertices}, a function $h \colon F \to V$, and an involution  $\tau$ on $F$. The fixed points of $\tau$ are called \emph{legs} and the orbits of length two are called \emph{edges}. 

A \emph{morphism of graphs} $f \colon \Gamma \to \Gamma'$ consists of two functions $f_\ast \colon V \to V'$ and $f^\ast \colon F' \to F$ such that $f^\ast$ is bijective on legs, injective on edges, and for which
\[ \begin{diagram}
F \setminus f^\ast(F') & \pile{\rTo^h \\ \rTo_{h \tau} } & V & \rTo^{f_\ast} & V'
\end{diagram}\]
is a coequalizer. Informally, $f$ is a composition of automorphisms and edge contractions. 

A graph with one vertex and no edges is called a \emph{corolla}. For every $v \in V$ we denote by $\gamma(v)$ the corolla with flag set $h^{-1}(v)$. 

A \emph{dual graph} is a graph with a \emph{genus function} $g \colon V \to \{0,1,2,\ldots\}$. We denote by $n(\Gamma)$ the number of legs of a graph $\Gamma$. For a vertex $v$, we  use the shorthand $n(v) = n(\gamma(v))$. A \emph{morphism of dual graphs} is a morphism  $f \colon \Gamma \to \Gamma'$ of the underlying graphs such that for all $v' \in V'$ we have
\[ 2g(v')-2 + n(v') = \sum_{f_\ast(v) = v'} (2g(v)-2 + n(v)). \]  If $\Gamma$ is a dual graph, then we declare its \emph{genus} $g(\Gamma)$ to be the unique integer satisfying
\[2 g(\Gamma)  - 2 + n(\Gamma) = \sum_{v\in V} (2g(v)-2+n(v)).\] A simple lemma shows that if $f\colon \Gamma \to \Gamma'$ is a morphism of dual graphs, then $g(\Gamma) = g(\Gamma')$. A dual graph is called \emph{stable} if for each vertex $v$ the inequality 
\[ 2g(v) - 2 + n(v) > 0 \]
is satisfied.

\begin{rem}The idea of a dual graph is best thought of topologically as follows. We imagine that a vertex of genus $g$ with $n$ adjacent legs describes a compact oriented surface of genus $g$ with $n$ boundary circles. Then the number $2g-2+n$ is just the negative of the Euler characteristic of the surface. If we think of an edge contraction as an operation which glues together the corresponding boundary components, then the formulas in the definition of a dual graph express that Euler characteristic should be additive over gluing of circles. \end{rem}

Now fix a category with duality $\C$.

\begin{defn}A \emph{$\C$-graph} is a graph $\Gamma$ with the following extra data: for every flag $x$ we are given an object $A_x$ of $\C$, and for an edge connecting the flags $x$ and $y$ we are given a pairing between $A_x$ and $A_y$. \end{defn}

\begin{defn}\label{morph} A \emph{morphism of $\C$-graphs} is a morphism $\Gamma \to \Gamma'$ of underlying graphs, together with a morphism $q_x \colon A_{f^\ast(x)} \to A_x $ for every flag $x$ of $\Gamma'$, such that for an edge between $x$ and $y$ in $\Gamma'$, the following diagram commutes:
\[\begin{diagram}
A_{f^\ast(x)} & \rTo & A_{f^\ast(y)}^\vee \\
\dTo^{q_x} & & \uTo_{q_y^\vee} \\
A_x & \rTo & A_y^\vee   
\end{diagram}
\]
\end{defn}

\begin{rem}One can describe a $\C$-graph as a graph $\Gamma$ together with a symmetric functor $\mathcal F \to \C$, where $\mathcal F$ is an appropriate category with duality defined in terms of the flags and edges of $\Gamma$. Then a morphism of $\C$-graphs can be defined more simply in terms of a natural transformation. We leave the details to the reader. \end{rem}

\subsection{Operads as algebras}


\newcommand{\preop}{{[\SS^0,\E]}}
  
\begin{notation} Let $\SS$ be the category of stable $\C$-graphs. Let $\SS^0$ be the full subcategory of corollas in $\SS$. Let $\preop$ denote the category of functors $\SS^0 \to \E$.  \end{notation}

\begin{defn}We call the objects of $\preop$ \emph{stable $\C\sym$-modules}. \end{defn}

\begin{rem}Suppose $\C$ is trivial. Then a functor $\SS^0 \to \E$ is the same thing as a \emph{stable $\sym$-module} in the terminology of \cite{getzlerkapranov}, as $\SS^0$ has the obvious skeleton
\[\SS^0 \cong \coprod_{\substack{g,n \geq 0\\2g-2+n>0}} \sym_n.  \]
Hence a functor from $\SS^0$ to $\E$ is just a family of $\sym_n$-representations indexed by $g$ and $n$, which recovers the definition of Getzler and Kapranov and justifies our terminology. More generally one has for any $\C$ that 
 \[\SS^0 \cong \coprod_{\substack{g,n \geq 0\\2g-2+n>0}} \C \wr \sym_n. \]\end{rem}

\renewcommand{\Iso}{\mathrm{Bij}}

\begin{notation}Let $\Iso(\SS)$ denote the full subcategory of $\SS$ consisting of graph morphisms which do not contract any edge. \end{notation}

\begin{rem}\label{isorem}Any functor $\V \colon \SS^0 \to \E$ can be extended to a functor $\Iso(\SS) \to \E$ via
\[ \V(\Gamma) = \bigotimes_{v \in V(\Gamma)} \V(\gamma(v)).\]Note that if $\Gamma$ is stable then so are all the $\gamma(v)$. \end{rem}

\begin{defn}\label{def2}Let $\M$ be the endofunctor on $\preop$ defined by 
\[ \M \V(\gamma) = \colim_{\Gamma \in \Iso(\SS) \downarrow \gamma} \V(\Gamma)\]
for any corolla $\gamma$. Here $\Iso(\SS) \downarrow \gamma$ denotes the slice category over $\gamma$; its objects are graphs in $\SS$ with a map to $\gamma$, and its morphisms are morphisms over $\gamma$ which do not contract any edges. \end{defn}

For any corolla $\gamma \in \SS^0$ there is a natural map $\V(\gamma) \to \M \V(\gamma)$ induced by sending $\id_\gamma$ to the corresponding morphism in $\Iso(\SS) \downarrow \gamma$. This defines a natural transformation $\eta \colon \id_\preop \to \M$. There is also a natural transformation $\mu \colon \M^2 \to \M$, defined as usual by `erasing braces' (cf.  \cite{markloperads}).

\begin{prop}The functor $\M$ is a monad with unit $\eta$ and multiplication $\mu$.  \end{prop}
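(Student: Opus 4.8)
The plan is to verify that $\M$ satisfies the monad axioms: $\mu \circ \M\eta = \mu \circ \eta\M = \id_\M$ (unit laws) and $\mu \circ \M\mu = \mu \circ \mu\M$ (associativity). The conceptual engine behind this is that the free-operad monad is built from a colimit over a category of graphs, and the monad structure reflects the fact that iterated graph substitution is an associative operation with the corolla as a unit. First I would unwind the definition of $\M^2$: for a corolla $\gamma$, $\M^2\V(\gamma) = \colim_{\Gamma \in \Iso(\SS)\downarrow\gamma} \M\V(\Gamma)$, where by Remark \ref{isorem} this means $\bigotimes_{v\in V(\Gamma)} \M\V(\gamma(v)) = \bigotimes_{v} \colim_{\Gamma_v \in \Iso(\SS)\downarrow\gamma(v)} \V(\Gamma_v)$. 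Since $\E$ is cocomplete and symmetric monoidal, the tensor product commutes with colimits in each variable, so $\M^2\V(\gamma)$ is naturally a colimit indexed by the category whose objects are a graph $\Gamma \to \gamma$ in $\Iso(\SS)$ together with a choice, for each vertex $v$ of $\Gamma$, of a graph $\Gamma_v \to \gamma(v)$. Such data is exactly a ``two-level'' graph: assembling the $\Gamma_v$ into a single graph $\widetilde\Gamma$ together with its map $\widetilde\Gamma \to \gamma$ (obtained by composing $\widetilde\Gamma \to \Gamma \to \gamma$), plus the partition of $V(\widetilde\Gamma)$ recording which vertices came from which $\gamma(v)$. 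The map $\mu$ is then induced by forgetting this partition, i.e.\ by the functor from the two-level indexing category to $\Iso(\SS)\downarrow\gamma$.

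The key technical step is to show this forgetful functor is \emph{cofinal} (in the colimit-preserving sense), or more precisely that it induces an isomorphism on the relevant colimits; equivalently, that $\M^2\V(\gamma) \to \M\V(\gamma)$ identifies the two-level colimit with the one-level colimit. This works because every object $\Gamma \to \gamma$ of $\Iso(\SS)\downarrow\gamma$ admits the canonical ``trivial'' two-level refinement where each $\Gamma_v = \gamma(v)$, and the slice categories involved are appropriately connected/contractible. I would either invoke the general machinery from \cite{markloperads} or \cite{getzlerkapranov} that the category of (stable $\C$-)graphs with these morphisms carries a composition making ``graph insertion'' associative, or argue directly: the unit laws follow because inserting the trivial refinement $\Gamma_v = \gamma(v)$ at every vertex (that is $\M\eta$), or refining $\gamma$ itself trivially to get $\Gamma$ with all vertices at the outer level (that is $\eta\M$), followed by $\mu$, recovers the identity on $\M\V(\gamma)$ — this is literally ``erasing braces that were never nested''. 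Associativity follows because $\M^3\V(\gamma)$ is a three-level colimit indexed by graphs with a nested two-step partition of their vertex sets, and both $\mu\circ\M\mu$ and $\mu\circ\mu\M$ are the map forgetting all partition data, computed in two different orders; since each individual forgetting map is an isomorphism onto the appropriate sub-colimit and colimits compose, the two composites agree.

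The main obstacle I expect is bookkeeping the $\C$-graph structure through this identification, namely checking that when one composes $\widetilde\Gamma \to \Gamma \to \gamma$ the flag-labelings by objects of $\C$ and the edge-pairings assemble consistently, and in particular that the new edges of $\widetilde\Gamma$ (those internal to some $\Gamma_v$, carrying pairings from $\Gamma_v$) and the old edges (those coming from $\Gamma$, whose pairings are pulled back along $q_x$'s as in Definition \ref{morph}) together make $\widetilde\Gamma$ a legitimate object of $\Iso(\SS)$. This is where the hypothesis that $\C$ is a category \emph{with duality} and the compatibility square in the definition of a morphism of $\C$-graphs do real work: one needs the dualities to match up so that the composite morphism of $\C$-graphs is well-defined. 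Once that compatibility is recorded, the argument is the same as in the uncolored case of Getzler--Kapranov, so I would state the coherence checks as ``a straightforward but tedious diagram chase, exactly as in \cite{getzlerkapranov}'' and emphasize only the point where the duality enters.

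\begin{proof}
We must check the unit and associativity axioms for $(\M,\eta,\mu)$.

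Unwinding the definitions and using Remark \ref{isorem}, for a corolla $\gamma$ we have
\[ \M^2\V(\gamma) \;=\; \colim_{\Gamma \in \Iso(\SS)\downarrow\gamma}\; \bigotimes_{v\in V(\Gamma)}\; \colim_{\Gamma_v \in \Iso(\SS)\downarrow\gamma(v)} \V(\Gamma_v). \]
Since $\E$ is cocomplete and its monoidal product preserves colimits in each variable, we may pull all the colimits outside, so that $\M^2\V(\gamma)$ is the colimit of $\V(\widetilde\Gamma)$ over the category $\mathcal{T}_2(\gamma)$ whose objects are a graph $p\colon \Gamma\to\gamma$ in $\Iso(\SS)\downarrow\gamma$ together with, for each $v\in V(\Gamma)$, a graph $p_v\colon\Gamma_v\to\gamma(v)$ in $\Iso(\SS)\downarrow\gamma(v)$, and where $\widetilde\Gamma$ is obtained by substituting $\Gamma_v$ into $\Gamma$ at $v$ for each $v$. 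One checks that the flag-labels and edge-pairings of the $\Gamma_v$ and of $\Gamma$ (the latter pulled back along the $q_x$ of $p$ as in Definition \ref{morph}, using the duality on $\C$) assemble to make $\widetilde\Gamma$ a well-defined object of $\Iso(\SS)$ equipped with a map $\widetilde{p}\colon\widetilde\Gamma\to\gamma$; this is a diagram chase identical to the uncolored case of \cite{getzlerkapranov}, the only new point being that the compatibility square in Definition \ref{morph} guarantees the two sources of pairings agree. The assignment $(\Gamma,\{\Gamma_v\})\mapsto(\widetilde\Gamma\to\gamma)$ defines a functor $\Phi\colon\mathcal{T}_2(\gamma)\to\Iso(\SS)\downarrow\gamma$ (forgetting the two-level structure), and $\mu$ is by construction the map on colimits induced by $\Phi$ together with the identity maps $\V(\widetilde\Gamma)\to\V(\widetilde\Gamma)$.

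The functor $\Phi$ admits a section $s$ sending $(\Gamma\to\gamma)$ to the trivial refinement $\Gamma_v=\gamma(v)$ for all $v$ (for which $\widetilde\Gamma=\Gamma$), and for every object of $\mathcal{T}_2(\gamma)$ there is a canonical morphism to an object in the image of $s$ collapsing the inner level. It follows that $\Phi$ induces an isomorphism on the colimits computing $\M\V(\gamma)$ and $\M^2\V(\gamma)$. The composite $\eta\colon\V\to\M\V$ corresponds, at the level of indexing categories, to the inclusion of the terminal object $\id_\gamma$, and $\M\eta$ (resp.\ $\eta\M$) corresponds to the inclusion $s$ of trivial inner (resp.\ outer) refinements; since $\mu$ is induced by $\Phi$ and $\Phi\circ s=\id$, both composites $\mu\circ\M\eta$ and $\mu\circ\eta\M$ are the identity on $\M\V$. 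This gives the unit laws.

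For associativity, the same argument applied twice identifies $\M^3\V(\gamma)$ with the colimit of $\V(\widetilde{\widetilde\Gamma})$ over the category $\mathcal{T}_3(\gamma)$ of graphs with a two-step nested partition of their vertex set (a graph over $\gamma$, a refinement at each vertex, and a further refinement at each vertex of each refinement). Both $\mu\circ\M\mu$ and $\mu\circ\mu\M$ are, on colimits, the map induced by the functor $\mathcal{T}_3(\gamma)\to\Iso(\SS)\downarrow\gamma$ forgetting all partition data: $\mu\circ\M\mu$ forgets the outermost level first and then the remaining one, while $\mu\circ\mu\M$ forgets the innermost first; in either order one arrives at the fully-forgetting functor. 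Since each intermediate forgetting map is the isomorphism onto the appropriate colimit established above, and colimits compose, the two composites $\M^3\V(\gamma)\to\M\V(\gamma)$ coincide. As all of this is natural in $\gamma\in\SS^0$ and in $\V$, we conclude that $\M$ is a monad with unit $\eta$ and multiplication $\mu$.
\end{proof}
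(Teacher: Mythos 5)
Your overall strategy (identify $\M^2\V(\gamma)$ with a colimit over two-level graphs, let $\mu$ be induced by the flattening functor $\Phi$, and check the monad axioms at the level of indexing categories) is the standard one, and it is the one the paper implicitly appeals to: the paper's own ``proof'' is a one-line deferral to Getzler--Kapranov, asserting that their conceptual argument carries over to the $\C$-colored setting. So the level of detail you supply is welcome. However, there is a genuine error in the middle of your argument: the claim that $\Phi$ is cofinal, i.e.\ that ``$\Phi$ induces an isomorphism on the colimits computing $\M\V(\gamma)$ and $\M^2\V(\gamma)$,'' is false, and so is the assertion that every object of $\mathcal T_2(\gamma)$ admits a canonical morphism to an object in the image of the section $s$ ``collapsing the inner level.'' Collapsing the inner level contracts the edges internal to the $\Gamma_v$, and edge contractions are exactly the morphisms excluded from $\Iso(\SS)$ (and hence from $\mathcal T_2(\gamma)$); in the Getzler--Kapranov convention a graph morphism goes from the finer graph to the coarser one, so there is not even a candidate map $\Gamma\to\widetilde\Gamma$ in the right direction. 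Concretely, $\mu$ is not an isomorphism: for trivial $\C$, $g=0$, $n=4$, the one-edge graph $\Gamma_0$ with two trivalent vertices contributes the summand $\V(\Gamma_0)_{\Aut\Gamma_0}$ \emph{twice} to $\M^2\V(\gamma)$ (once with outer graph $\gamma$ and inner graph $\Gamma_0$, once with outer graph $\Gamma_0$ and trivial inner refinement) but only once to $\M\V(\gamma)$; if $\mu$ were an isomorphism the free modular operad monad would be idempotent, which it is not.

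The good news is that the false statement is not needed where you use it. Your unit-law argument only uses $\Phi\circ s=\id$ (and its analogue for the trivial outer refinement), which is correct. For associativity, drop the appeal to ``each intermediate forgetting map is an isomorphism'' and argue directly: a map out of $\M^3\V(\gamma)=\colim_{\mathcal T_3(\gamma)}\V(\widetilde{\widetilde\Gamma})$ is determined by its restriction to the cocone components, and on the component indexed by a three-level graph both $\mu\circ\M\mu$ and $\mu\circ\mu\M$ are the cocone map of $\M\V(\gamma)$ at the fully flattened graph, with the identity on $\V(\widetilde{\widetilde\Gamma})$. The real content, which you correctly flag but should make the centerpiece, is that graph substitution is strictly associative \emph{including the $\C$-decorations}: flattening inner-then-outer or outer-then-inner produces the same $\C$-graph over $\gamma$, the compatibility square of Definition~\ref{morph} guaranteeing that the pulled-back pairings agree. (Two minor points: the identification $\M^2\V(\gamma)\cong\colim_{\mathcal T_2(\gamma)}\V(\widetilde\Gamma)$ requires $\otimes$ to commute with colimits in each variable, an assumption the paper leaves implicit in ``cocomplete symmetric monoidal''; and in the colored setting the morphisms of $\Iso(\SS)$ include arbitrary $\C$-morphisms on decorations, not just isomorphisms, so ``cofinality''-style shortcuts are even less available than in the classical case.)
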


\begin{proof}A rather conceptual proof can be found in \cite{getzlerkapranov}, which carries through with only minor changes to the $\C$-colored setting. The necessary commutative diagrams can also be checked somewhat tediously by hand. \end{proof}

\begin{defn}A \emph{modular $\C$-operad} is an $\M$-algebra.\end{defn}

\begin{rem}A posteriori, the fact that $\M$ turns out to be a monad can be explained by saying that $\M$ maps a stable $\C\sym$-module $\V$ to the underlying stable $\C\sym$-module of the free modular $\C$-operad generated by $\V$. Hence the fact that $\M$ is a monad expresses the fact that the free modular operad functor is left adjoint to the forgetful functor sending a modular operad to its underlying stable $\C\sym$-module.
 \end{rem}

\begin{rem}\label{altdef}One can describe modular $\C$-operads more explicitly in the following way. A modular $\C$-operad $\A$ consists of:
\begin{enumerate}
\item for any $g, n \geq 0$ such that $2g-2+n > 0$, and any $n$-tuple $(x_1,\ldots,x_n)$ of objects of $\C$, an object 
\[ \A(g,x_1,\ldots,x_n)\]
of $\E$;
\item for any $\sigma \in \sym_n$ a map
\[ \A(g,x_1,\ldots,x_n) \to \A(g,x_{\sigma(1)},\ldots,x_{\sigma(n)});\]
\item for any morphism $x_i \mapsto x_i'$ in $\C$ a map
\[ \A(g,x_1,\ldots,x_i,\ldots,x_n) \to \A(g,x_1,\ldots,x_i',\ldots,x_n); \]
\item for any $i$ and $j$ and for every pairing between $x_i$ and $y_j$, a gluing map
\[ \A(g_1,x_1,\ldots,x_n) \otimes \A(h,y_1,\ldots,y_m) \to \A(g+h,x_1,\ldots,\widehat{x_i},\ldots,\widehat{y_j},\ldots,y_m);\] 
\item for any $i \neq j$ and for every pairing between $x_i$ and $x_j$, a gluing map
\[ \A(g,x_1,\ldots,x_n) \to \A(g+1,x_1,\ldots,\widehat{x_i},\ldots,\widehat{x_j},\ldots, x_n).\]
\end{enumerate}
One thinks of $\A(g,x_1,\ldots,x_n)$ as the value of $\A$ on a corolla of genus $g$ with $n$ legs decorated by $x_1,\ldots,x_n$. We will not list the functoriality conditions and commutative diagrams that these maps must satisfy. 
\end{rem}

\subsection{Algebras over operads}

The notion of an algebra over an operad can be defined in various levels of generality. We assume in this section that the target category $\E$ is compact closed, i.e.\ every object is dualizable, which will be sufficient for this article. In particular, this implies that $\E$ is a category with duality. 

\begin{defn}Suppose given a weak symmetric functor $\rho \colon \C \to \E$. We associate to $\rho$ its \emph{endomorphism operad} $\End_\rho$. In the notation of Remark \ref{altdef}, it is defined on objects by
\[ \End_\rho(g,x_1,\ldots,x_n) = \bigotimes_{i=1}^n \rho(x_i).\]
Every pairing between $x$ and $y$ in $\C$ gives a pairing between $\rho(x)$ and $\rho(y)$ in $\E$ in the usual sense, i.e.\ a map
\[ \rho(x) \otimes \rho(y) \to \mathbf 1\]
where $\mathbf 1$ is the monoidal unit in $\E$. This pairing defines the gluing maps for the modular $\C$-operad $\End_\rho$. \end{defn}

\begin{defn}An \emph{algebra} over a modular $\C$-operad $\A$ is a weak symmetric functor $\rho \colon \C\to\E$ and a morphism $\A \to \End_\rho$. \end{defn}

\subsection{Other operad-like structures}
\renewcommand{\G}{\mathbf{G}}
By considering some other category of graphs $\G$ instead of $\SS$ one can define in a similar same way $\C$-colored versions of other operad-like constructions. One lets $\G^0$ be the subcategory of corollas. In order for the definition of $\M$ to make sense, one needs to assume that for any $\Gamma \in \ob(\G)$ and $v \in V(\Gamma)$, we also have $\gamma(v) \in \ob(\G)$. To define the multiplication map $\mu$ one needs to assume that $\G$ is closed under ``erasing braces''. With these assumptions, it will remain true that $\M$ is a monad.

For example, take $\G$ to be the full subcategory of trees in $\SS$. The algebras over the corresponding monad are exactly the \emph{cyclic $\C$-operads}. 

We would also like to be able to define $\C$-colored versions of more ordinary things like operads and PROPs, which are modeled on directed graphs. One could repeat appropriate modification of all our definitions for digraphs, but there is a quicker way. This is based on the observation that an ordinary operad is the same thing as a two-colored cyclic operad whose colors are $\{\mathrm{input},\mathrm{output}\}$, and where the gluing rules have been twisted by an involution: one is only allowed to glue an input leg to an output, and vice versa. 

Observe that for any category $\C$, there is an obvious structure of category with duality on the disjoint union $ \C \coprod \C^\op.$

\begin{defn}We define a \emph{$\C$-digraph} to be a $(\C\coprod\C^\op)$-graph. Flags decorated by objects in $\C$ are called \emph{incoming} and flags decorated by objects in $\C^\op$ are \emph{outgoing}. \end{defn}

\begin{rem}Note that every edge in a $\C$-digraph consists of exactly one incoming and one outgoing flag, by our definition of a pairing. \end{rem}

Let then for instance $\G$ be the category of $\C$-digraphs which are trees, and where each vertex is adjacent exactly one outgoing flag. Algebras over the resulting monad are called $\C$-operads. If $\G$ consists of arbitrary $\C$-digraphs which are trees, then we have defined the notion of a $\C$-PROP. This also gives the correct notions of algebras over $\C$-operads and $\C$-PROPs. 

\begin{prop}This definition of a $\C$-operad coincides with the one in Definition \ref{defold}. \end{prop}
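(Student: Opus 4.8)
The plan is to unwind both definitions of a $\C$-operad and exhibit a bijection (in fact an equivalence) between the two notions of algebra. On the one hand, Definition \ref{defold} says a $\C$-operad is a monoid in $\C\sym$-modules under plethysm, where a $\C\sym$-module is a sequence of functors $\V(n)\colon \C^\op\times(\C\wr\sym_n)\to\E$. On the other hand, the newly defined notion is an $\M$-algebra for the monad $\M$ built from the category $\G$ of $\C$-digraph trees in which every vertex has exactly one outgoing flag. I would first observe that a corolla in $\G$ is precisely a vertex with $n$ incoming flags decorated by objects $x_1,\dots,x_n$ of $\C$ and one outgoing flag decorated by an object $x_0$ of $\C^\op$, i.e.\ by an object of $\C$. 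Thus $\G^0$ has a skeleton $\coprod_{n\ge 0}\C^\op\times(\C\wr\sym_n)$ (the $\C^\op$ recording the output color, the $\C\wr\sym_n$ recording the input colors up to permutation and recoloring), so a functor $\G^0\to\E$ is exactly a $\C\sym$-module in the sense of the first subsection. This matches up the underlying objects on both sides.

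Next I would identify the monad. The key point is the standard fact (for ordinary colored operads, and here with colors in a category) that for the category of rooted trees the free-operad monad coincides with the monad whose algebras are monoids under the substitution/plethysm product: $\M\V(\gamma)=\colim_{\Gamma\to\gamma}\V(\Gamma)$ where $\Gamma$ ranges over $\C$-digraph trees with one output at each vertex mapping to the corolla $\gamma$. Decomposing such a tree by its root vertex — which has some $k$ incoming edges, to each of which is grafted a smaller such tree — gives exactly the inductive formula $\M\V=\coprod_{k\ge0}e\sqcup(\V\circ(\text{lower trees}))$, and iterating, the colimit over all trees is the free monoid $\coprod_k \V^{\circ k}$ on $\V$ for the plethysm product, with unit the $\C\sym$-module $e$ concentrated in arity one given by $\Hom_\C(-,-)\odot\mathbf 1$ (this is the corolla with one input and one output, whose coloring data is a pairing between an object of $\C$ and an object of $\C^\op$, i.e.\ a morphism in $\C$). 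I would check that the monad unit $\eta$ and multiplication $\mu$ ("erasing braces", i.e.\ grafting trees of trees into trees) correspond under this identification to the unit and associativity structure of the free-monoid monad. Consequently an $\M$-algebra is the same as a monoid in $\C\sym$-modules under plethysm, which is Definition \ref{defold}.

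I would also verify that the induced gluing/composition maps agree and that the $\sym_n$- and $\C$-equivariance built into a $\C\sym$-module is exactly the functoriality over $\G^0$ encoded by graph isomorphisms (relabeling legs and applying morphisms $q_x$ to flag colors), using Remark \ref{isorem} to extend $\V$ from corollas to all of $\Iso(\G)$. The one subtlety worth spelling out is the role of the duality: in a $\C$-digraph an edge pairs an incoming flag (colored by $x\in\C$) with an outgoing flag (colored by an object of $\C^\op$), and by the definition of a pairing in $\C\coprod\C^\op$ this is precisely a morphism in $\C$, which is why no genuine inner-product data is needed and the directed theory is "untwisted" — exactly as in the $\{\mathrm{input},\mathrm{output}\}$ heuristic preceding the statement.

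The main obstacle I expect is purely bookkeeping: writing the comparison isomorphism $\M\V\cong\coprod_k\V^{\circ k}$ cleanly, i.e.\ matching the colimit over the slice category $\Iso(\G)\downarrow\gamma$ with the iterated coends and coproducts defining the free monoid, and checking that this isomorphism is compatible with $\mu$ and $\eta$ on the nose. This is the same computation that underlies the classical statement "algebras over the free-operad monad = operads", so the honest work is to confirm that none of the steps break when the set of colors is replaced by a category (the coend over $\C\wr\sym_k$ in the plethysm definition playing the role of the colimit over tree automorphisms together with recoloring morphisms). Given the earlier Proposition that plethysm is associative with unit $e$, and the earlier Proposition that $\M$ is a monad, the comparison reduces to matching generators and relations, so I would present it as "both sides are the free monoid on $\V$" and leave the diagram chases to the reader, as is done elsewhere in the paper.
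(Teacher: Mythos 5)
Your overall route is the paper's route: identify functors on the corollas of the directed tree category with $\mathcal{C}\mathbb{S}$-modules (your skeleton $\coprod_{n}\mathcal{C}^{\mathrm{op}}\times(\mathcal{C}\wr\mathbb{S}_n)$ is correct), and then match the graph monad of Definition \ref{def2} with the free plethysm-monoid monad, reducing to the classical uncolored statement (Markl's Theorem 40). The one point where the proposal falls short is precisely the point the paper identifies as the only new content in the colored setting and actually proves, and which you set aside as ``bookkeeping'': the comparison of the colimit over the slice category $\Iso(\graph{G})\downarrow\gamma$ with the coend over $\mathcal{C}\wr\mathbb{S}_k$ appearing in Definition \ref{def1}. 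This is not just relabeling. Fix a one-edge tree over $\gamma$ and look at the subcategory of its decorations: an object is a pairing between an incoming color and an outgoing color, i.e.\ an arrow of $\mathcal{C}$ (you note this), but by Definition \ref{morph} the recoloring maps $q_x$ on the two flags of the edge have opposite variance, so a morphism of decorations is a \emph{twisted} commutative square; the indexing category is therefore the twisted arrow category of $\mathcal{C}$, not $\mathcal{C}^{\mathrm{op}}\times\mathcal{C}$ or a groupoid of automorphisms-plus-recolorings. The lemma that makes your ``both sides are the free monoid on $\V$'' argument close is that the colimit over the twisted arrow category of a functor on $\mathcal{C}^{\mathrm{op}}\times\mathcal{C}$ computes the coend $\int^{\mathcal{C}}$ (Mac Lane, Ex.~IX.6.3), and, for a graph with $n$ edges, the analogous statement with $\mathcal{C}\wr\mathbb{S}_n$ in place of $\mathcal{C}$. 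With that identification supplied, the rest of your outline (root-vertex decomposition, compatibility of $\eta$ and $\mu$ with the free-monoid structure) proceeds exactly as in the paper.
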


\begin{proof}We allow ourselves to be brief, as the proof is similar to the uncolored case \cite[Theorem 40]{markloperads}. The only new subtlety in the $\C$-colored situation is that we must compare the coend appearing in Definition \ref{def1} with the colimit in Definition \ref{def2}. 

Consider the full subcategory $\mathcal G$ of $\Iso(\G) \downarrow \gamma$ where the underlying graph is given by some fixed graph $\Gamma$ with a single edge. An object of $\mathcal G$ consists of a decoration of this edge, i.e.\ two objects $x$ and $y$ of $\C \coprod \C^\op$, and a pairing between $x$ and $y$. It follows that an object of $\mathcal G$ is an arrow in $\C$. By comparing with Definition \ref{morph}, we see that a morphism between $x\to y$ and $x' \to y'$ is a commutative square
\begin{diagram}
x & \rTo & y \\
\dTo & & \uTo \\
x' & \rTo & y'.
\end{diagram}
In other words, $\mathcal G$ coincides with the so-called \emph{twisted arrow category} of $\C$, with its natural map to $\C^\op \times \C$. If $F$ is any functor on $\C^\op \times \C$, then
\[ \colim_{\mathcal G} F = \int^\C F,\]
see \cite[Ex. IX.6.3]{maclane}. For a graph $\Gamma$ with $n$ edges, we find instead the category $\C \wr \sym_n$, and the coend over $\C \wr \sym_n$. It is now not hard to show that the two definitions of a $\C$-operad coincide.
\end{proof}

\renewcommand{\G}{\L G}

\section{Equivariant CohFTs}\label{cohfts}
\subsection{The definition of a $G$-CohFT}
\renewcommand{\M}{\mathcal{M}}
\renewcommand{\S}{\graph{S}}

\newcommand{\Mot}{\mathbf{Mot}}
\newcommand{\stack}[1]{\mathcal{#1}}
\newcommand{\X}{\stack{X}}
\newcommand{\Y}{\stack{Y}}
\newcommand{\ev}{\mathrm{ev}}
\newcommand{\vir}{\mathrm{vir}}
\newcommand{\gl}{\mathrm{gl}}
\newcommand{\stacks}{\mathbf{Stack}}

Recall that $\MM_{g,n}^G$ is the moduli stack parametrizing stable $n$-pointed curves $C$ of genus $g$ equipped with an admissible $G$-torsor $P \to C$ and liftings of the $n$ markings to $P$. Let $\S$ be the category of stable $\L G$-graphs, and again $\S^0$ the full subcategory of corollas. Let $\stacks$ be the category of DM-stacks over some fixed base $k$ where $|G|$ is invertible. The analytically inclined reader can also take $\stacks$ to be the category of complex orbispaces.  

\begin{rem}There are two minor issues at this point. We wish to consider operads in $\stacks$.  Unfortunately, we formulated the earlier theory in a cocomplete symmetric monoidal category, but $\stacks$ is not cocomplete, and it is a $2$-category! However, neither of these are serious problems. First of all, even though $\stacks$ is not cocomplete, all colimits that occur in the definition of an $\L G$-modular operad will exist: indeed, whenever the category of colors is a finite groupoid, it is easy to see that one only needs to assume the existence of coproducts and quotients by actions of finite groups. Secondly, there are no $2$-categorical surprises, either: if we let $\E$ be a 2-category instead throughout the preceding section, then the endofunctor $\mathbb M$ is naturally a $2$-monad, and we can define a modular operad to be a pseudo-algebra over it.  \end{rem}

\begin{defn}For a corolla $\gamma \in \ob(\S^0)$ with genus $g$, and legs decorated by $\gamma_1,\ldots,\gamma_n$, let $\M(\gamma)$ be the open and closed substack of $\MM_{g,n}^G$ where the monodromy around the $i$th marking is given by $\gamma_i$, for $i = 1,\ldots,n$. Then $\M$ naturally becomes a stable $\L G \wr \sym$-module in $\stacks$. \end{defn}

\begin{thm}The functor $\M$ extends naturally to a modular $\L G$-operad in $\stacks$. \end{thm}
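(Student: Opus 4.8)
The plan is to verify that the $\M$-algebra structure on the stable $\L G\wr\sym$-module $\M$ is given by the gluing maps described in the introduction, i.e.\ that forming admissible $G$-covers is compatible with edge contraction in the category $\SS$ of stable $\L G$-graphs. By the discussion in Section \ref{operads} (see Definition \ref{def2} and Remark \ref{altdef}), giving a modular $\L G$-operad structure amounts to producing, for each $\L G$-graph $\Gamma$ with underlying corolla $\gamma$, a map $\M(\Gamma) = \bigotimes_{v} \M(\gamma(v)) \to \M(\gamma)$, natural in $\Iso(\SS)\downarrow\gamma$ and compatible with composition of edge contractions (so that $\mu$ is respected). Equivalently, in the language of Remark \ref{altdef}, I must produce the $\sym_n$-action (already present), the functoriality in the decorating objects of $\L G$ (the conjugation action of $G$ on markings), and the two types of gluing maps, corresponding to a single non-separating edge and to joining two corollas along an edge.

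First I would construct the gluing maps at the level of geometry. Given admissible $G$-covers $(P_1\to C_1)$ and $(P_2\to C_2)$ with chosen liftings $x_i\in P_1$ over the $i$th marking and $y_j\in P_2$ over the $j$th marking, and given a pairing in $\L G$ between the decorations $\gamma_i$ and $\delta_j$ — which by Example \ref{groupex} and the definition of a pairing means precisely that the monodromy $\gamma_i$ equals the inverse $\delta_j^{-1}$, up to the specified morphism in $\L G$ — one glues $C_1$ to $C_2$ at the marked points to get a nodal curve $\overline C$, and one glues $P_1$ to $P_2$ by identifying the fibers over the two markings using the chosen liftings $x_i\mapsto y_j$. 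Condition (3) of Definition \ref{adm} is satisfied exactly because the two monodromies are mutually inverse, so $\overline P\to\overline C$ is again an admissible $G$-cover; it carries liftings over all the remaining markings, and the construction visibly does not depend on the liftings over the glued markings after we pass to the quotient by the diagonal $G$-action that forgets them. This quotient is the reason the target is $\M(\gamma)$ and not a finite cover of it, and it is also why only coproducts and quotients by finite-group actions are needed for the colimit in Definition \ref{def2} to exist in $\stacks$ (as noted in the preceding remark). The self-gluing map (clause (5) of Remark \ref{altdef}) is entirely analogous, gluing two markings of a single cover. That these maps are morphisms of DM-stacks, and not merely of sets of points, follows since the whole construction works in families: it is the standard gluing morphism for $\overline{\M}_{g,n}$ lifted along the admissible-cover moduli problem, which is representable and finite over $\overline{\M}_{g,n}$ by \cite{acv03}.

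Next I would check the coherence. The operations just defined are $\sym$-equivariant and $G$-equivariant by construction: permuting markings or changing a lifting $x_i\mapsto g\cdot x_i$ commutes with gluing (changing a lifting conjugates the monodromy, which is exactly the functoriality in $\L G$ demanded by clause (3)), and one must glue a leg decorated by $\gamma$ to one decorated by a conjugate of $\gamma^{-1}$, matching the pairings in an $\L G$-graph as in Definition \ref{morph}. Associativity and commutativity of iterated gluings — the assertion that the maps $\M(\Gamma)\to\M(\gamma)$ assemble into a natural transformation that is compatible with $\mu$, equivalently that $\M$ is an $\M$-algebra — reduces to the corresponding statement for $\overline{\M}_{g,n}$, which is the classical fact that $H_\bullet(\overline{\M}_{g,n})$ is a modular operad \cite{getzlerkapranov}: the only new content is the bookkeeping of monodromies and liftings, which is local to each edge and hence automatically coherent. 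Here one uses the identification $\SS^0\cong\coprod_{2g-2+n>0}\L G\wr\sym_n$ from the remark after Definition \ref{def2}. In the $2$-categorical setting of $\stacks$ this coherence is to be understood as making $\M$ a pseudo-algebra over the $2$-monad $\mathbb M$, with the associativity $2$-cells supplied by the canonical isomorphisms between the various iterated gluing stacks.

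The main obstacle is the last point: rather than a conceptual argument one is reduced, at bottom, to checking a (large) diagram of compatibilities between edge contractions, the conjugation action, and the diagonal-quotient-on-gluing, and to doing so $2$-categorically. My expectation is that each individual compatibility is genuinely routine — it is local at an edge and boils down either to the analogous identity for $\overline{\M}_{g,n}$ or to an elementary identity in the loop groupoid $\L G$ — so that the theorem is true for the reasons sketched above, with the only real work being the patience to record the diagrams; accordingly I would state the proof at roughly the level of detail of the proof of the proposition that $\mathbb M$ is a monad, referring to \cite{getzlerkapranov, acv03, markloperads} for the parts that are formally identical to the uncolored case.
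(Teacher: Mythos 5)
Your proposal is correct and follows essentially the same route as the paper: the structure maps are the geometric gluing of admissible covers along the chosen liftings, well-definedness comes from the inverse-monodromy (pairing) condition, and the coherence/associativity is checked at the level of moduli functors (families), treated $2$-categorically as a pseudo-algebra structure. The paper compresses this last step into an appeal to the 2-Yoneda lemma, which is exactly the "works in families, associativity is clear on moduli problems" argument you spell out in more detail.
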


\begin{proof}The structure maps in the operad $\M$ are given by gluing together admissible covers along markings. The monodromy condition ensures that this is well defined. For the necessary associativity conditions, apply the 2-Yoneda lemma: on the level of moduli functors, associativity is clear. \end{proof}

Since homology is a symmetric monoidal functor, one immediately obtains a modular $\L G$-operad $H_\bullet(\M)$ in the category of graded $\field Z$-modules (assuming that we are working over the complex numbers). Algebraically, it is maybe more natural to consider the co-operad $H^\bullet(\M)$ associated to some Weil cohomology theory. In any case one can consider (co)algebras over the resulting operads. The main examples of such algebras are the $G$-equivariant Cohomological Field Theories of \cite{jkk}. They assume the existence of a flat identity, which is not always natural from the operadic perspective. If we agree that a non-unital CohFT is defined by omitting axioms (iii) and (iv) from Definition 4.1 in loc.\ cit., then we can state the following result.

\begin{prop}An algebra $\H$ over $H_\bullet(\M,\Q)$ (in the category of finite dimensional vector spaces) is the same thing as a non-unital $G$-CohFT.\end{prop}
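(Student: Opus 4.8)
The plan is to unwind both sides of the claimed equivalence into the explicit list of data and axioms given in Remark~\ref{altdef}, and match them term by term against the axioms in Definition~4.1 of \cite{jkk} (with axioms (iii) and (iv) deleted). On the operadic side, an algebra over $H_\bullet(\M,\Q)$ is by definition a weak symmetric functor $\rho\colon \L G \to \mathrm{Vect}_\Q^{\mathrm{fd}}$ together with a morphism $H_\bullet(\M,\Q) \to \End_\rho$. First I would observe that a weak symmetric functor $\rho\colon \L G \to \mathrm{Vect}^{\mathrm{fd}}$ is precisely a $G$-graded vector space $\bigoplus_{\gamma \in G} \H_\gamma$ together with an action of $G$ that shifts the grading by conjugation (this is the data of a functor out of the action groupoid $\L G$), equipped with a compatible symmetric pairing $\H_\gamma \otimes \H_{\gamma^{-1}} \to \Q$ --- using Example~\ref{groupex} to identify the duality on $\L G$ with $\gamma \mapsto \gamma^{-1}$, and the fact (noted in the excerpt) that a pairing between $x$ and $y$ is a map $x \to y^\vee$. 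This is exactly the underlying state space of a $G$-CohFT: a $G$-Frobenius-algebra-like object with $G$-action and a metric pairing sectors of inverse monodromy.

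Next I would translate the morphism $H_\bullet(\M,\Q)\to \End_\rho$ using the explicit description in Remark~\ref{altdef}. The value $\M(\gamma)$ on a corolla of genus $g$ with legs decorated by $\gamma_1,\dots,\gamma_n$ is, by the definition preceding the theorem, the substack of $\MM^G_{g,n}$ with prescribed monodromies; so a morphism of operads is a collection of maps $H_\bullet(\M(\gamma),\Q)\to \H_{\gamma_1}\otimes\cdots\otimes \H_{\gamma_n}$, i.e.\ classes in $H^\bullet$ of the moduli space of admissible $G$-covers with values in the tensor product of the corresponding sectors. That is exactly the collection of correlator maps $\Lambda_{g,n}$ appearing in \cite{jkk}. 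The compatibility conditions then correspond, one by one, to: the $\sym_n$-equivariance axiom (item (2) of Remark~\ref{altdef} versus the symmetry axiom of \cite{jkk}); the $G$-equivariance axiom (item (3), coming from the $G\wr\sym_n$-action on $\MM^G_{g,n}$ via relabelling of the chosen liftings, matching the $G$-equivariance axiom of \cite{jkk}); and the two gluing/splitting axioms (items (4) and (5), coming from the operad gluing maps, which are precisely the boundary restriction maps on $\MM^G_{g,n}$ along the loci where a node is formed with inverse monodromies on the two branches). Here I would invoke the theorem that $\M$ is a modular $\L G$-operad, so that these boundary maps assemble into the operad structure, and the remaining content is just that the axioms of \cite{jkk} are literally the pullback axioms along these boundary inclusions; the deletion of axioms (iii) and (iv) of loc.\ cit.\ corresponds exactly to dropping the flat-unit data, which has no operadic counterpart in the non-unital setting.

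Finally I would record the inverse construction, to get an equivalence rather than just a map in one direction: given a non-unital $G$-CohFT in the sense of \cite{jkk}, the state space with its $G$-action and metric defines the weak symmetric functor $\rho$, and the correlators $\Lambda_{g,n}$ define the operad morphism $H_\bullet(\M,\Q)\to\End_\rho$, with the \cite{jkk} axioms furnishing exactly the functoriality and commutativity conditions left unlisted in Remark~\ref{altdef}. The two constructions are manifestly mutually inverse because in each direction one is just rephrasing the same underlying data.

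The main obstacle, I expect, is bookkeeping rather than conceptual: one must be careful that the open-and-closed decomposition $\MM^G_{g,n}=\coprod \M(\gamma)$ matches the sector decomposition used in \cite{jkk}, that the boundary strata of $\MM^G_{g,n}$ really are indexed by stable $\L G$-graphs with the inverse-monodromy condition built in (this is where condition (3) of Definition~\ref{adm}, equivalently the duality $\gamma\mapsto\gamma^{-1}$ on $\L G$, does its work), and above all that the $G$-equivariance in item (3) of Remark~\ref{altdef} --- which comes from morphisms in $\L G$, i.e.\ from conjugation --- is correctly identified with the change-of-lifting action on the moduli space and hence with the $G$-equivariance axiom of \cite{jkk} rather than being an extra unmatched condition. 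Once those identifications are pinned down the proof is a routine, if lengthy, dictionary check, which is why it is reasonable to present it at the level of a sketch.
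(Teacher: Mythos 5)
Your proposal is correct and is essentially the paper's own argument: the paper simply remarks that the usual dictionary between algebras over $H_\bullet(\MM_{g,n})$ and CohFTs carries through with minor changes, and your sketch just makes that dictionary explicit (weak symmetric functor $\leftrightarrow$ $G$-graded state space with conjugation action and pairing of inverse sectors, operad morphism $\leftrightarrow$ correlators, structure maps $\leftrightarrow$ the JKK axioms minus the unit axioms). The bookkeeping caveats you flag are exactly the "minor changes" the paper leaves to the reader.
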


\begin{proof}The usual proof that an algebra over $H_\bullet(\MM_{g,n})$ is the same thing as a CohFT carries through with only minor changes. \end{proof}

In particular $\H$ needs to be a representation of $\L G$, which means that it is a module over the Drinfel'd double $D(k[G])$. As remarked earlier, this module structure is well known by physicists \cite{dpr, freed, discretetorsion}. 

\begin{rem}Axiom (i), that $\H$ is a $G$-graded $G$-module, just says that $\H$ is a representation of $\L G$. Write $\H = \bigoplus_{\gamma \in G} \H_\gamma$. We remark that any algebra over $H_\bullet(\M,\Q)$ has a natural structure of a non-unital braided commutative $G$-Frobenius algebra obtained by imitating the construction in \cite{jkk}. The multiplication is defined by noting that $\MM_{0,3}^G$ is a finite union of points (generally with nontrivial automorphism group), each of which defines a partial multiplication on $\H$:
\[ \H_{\gamma_1} \otimes \H_{\gamma_2} \to \H_{\gamma_3},\]
where $\gamma_i$ is the monodromy around the $i$th marked point. A total multiplication can then be defined by summing over the distinguished points $\xi(\gamma_1,\gamma_2, \gamma_2^{-1}\gamma_1^{-1})$, see \cite[2.5]{jkk}. The arguments of loc.\ cit.\ extend to show associativity (i.e.\ the WDVV equation, via $\MM_{0,4}^G$) and the trace axiom (via $\MM_{1,1}^G$). \end{rem}

\subsection{Gromov--Witten invariants of global quotients}Just as the main example of a CohFT is the cohomology of a smooth projective variety, it is expected that the main example of a $G$-CohFT comes from a smooth projective variety with a $G$-action. So let for the remainder of this section $X$ be a smooth projective variety acted upon by $G$. For simplicity, we work over the complex numbers, so that classes of curves lie in the second integral homology group; it is well known how to describe this algebraically. 

\begin{defn}Let $\beta \in H_2(X/G,\Z)$. Define $\MM_{g,n}^G(X,\beta)$ to be the moduli stack parametrizing the following data: 
\begin{itemize}
\item an admissible $G$-cover $P \to C$, where $C$ is a \emph{prestable} $n$-pointed curve of genus $g$ 
\item a $G$-equivariant map $f \colon P \to X$, such that the induced map $\overline f \colon C \to X/G$ is stable in the sense of Kontsevich and $\overline f_\ast [C] = \beta$; 
\item a section of $P\to C$ over each marked point of $C$. 
\end{itemize}
Equivalently, we have $\MM_{g,n}^G (X,\beta) = \MM_{g,n}([X/G],\beta) \times_{\MM_{g,n}(BG)} \MM_{g,n}^G$, where $\MM_{g,n}(\mathcal X,\beta)$ denotes the usual space of stable maps to a stack. \end{defn}


It follows from the work of \cite{normalcone} and \cite{agv08} that $\MM_{g,n}^G(X,\beta)$ has a virtual fundamental class $[\MM_{g,n}^G(X,\beta)]^\vir$ defined by the relative obstruction theory given by the $G$-invariants of $\mathbf R \pi_\ast f^\ast T_X$, where $\pi \colon P \to \MM_{g,n}^G(X,\beta)$ is the natural projection.

\begin{defn}Denote by $\M(X,\beta)$ the stable $\L G \wr \sym$-module in $\stacks$ given by the spaces $\MM_{g,n}^G(X,\beta)$. We extend $\M(X,\beta)$ to a functor from stable $\L G$-graphs to stacks, but in a slightly different way than in Remark \ref{isorem}: for an $\L G$-graph $\Gamma$ with $n$ vertices, we define 
\[ \M(X,\beta)(\Gamma) = \coprod_{\beta_1 + \ldots + \beta_n = \beta} \prod_{v \in V(\Gamma)} \M(X,\beta_i)(\gamma(v)). \]
\end{defn}

\newcommand{\I}{\mathrm{I}}

\begin{defn}
The \emph{inertia variety} of $X$ is defined by 
\[ \I X = \coprod_{g\in G} X^g.\]\end{defn}

Note that $\I X$ is naturally a representation of $\L G$ in the category of algebraic varieties, since the element $h \in G$ carries $X^g$ to $X^{hgh^{-1}}$. 

Since $X$ is smooth, its inertia variety is smooth too, see \cite{iversenfixed}.

\newcommand{\Corr}{\mathbf{Corr}}

\begin{defn}Let $\Corr$ be the $\Q$-linear category, whose objects are smooth and proper DM-stacks, and whose morphisms are given by
\[ \Hom_\Corr(\X,\Y) = A^{\bullet}(\Y\times \X), \]
where the latter denotes the Chow ring with rational coefficients. Composition is defined via the formula
\[ f \circ g = p_{13,\ast} (p_{12}^\ast f \cup p_{23}^\ast g). \]\end{defn}
\begin{rem}The category of spans of smooth proper DM-stacks, with morphisms defined via pullbacks, sits naturally inside $\Corr$: a span
\[ \X \stackrel{f}{\leftarrow} \stack Z \stackrel g \to \Y \]
defines a morphism $\X \to \Y$ in $\Corr$ via $(g \times f)_\ast [\stack Z]$. 
\end{rem}

\begin{rem}Let $\Corr'$ be the category defined in the same way, except with varieties instead of stacks. The natural inclusion $\Corr' \hookrightarrow \Corr$ induces an equivalence of categories once one takes the pseudo-abelian completion of both categories, see \cite{toenmotives}. \end{rem}

The category $\Corr$ is compact closed with every object equal to its own dual. The counit is given by the span
\[ \X \times \X \stackrel \Delta \leftarrow \X \to \Spec k,\]
and vice versa for the unit. This is a kind of motivic Poincar\'e duality; it gives the usual Poincar\'e duality on any realization functor $H^\bullet$. Moreover,  $\I X$ is a symmetric functor $\L G \to \Corr$ since $X^g = X^{g^{-1}}$. It follows that we can talk about the endomorphism operad $\End(\I X)$, which is a modular $\L G$-operad in $\Corr$. Its value on an $n$-tuple $(g_1,\ldots,g_n)$ of elements of $G$ is the product $\prod_{i=1}^n X^{g_i}$.

There are natural evaluation maps $\MM_{g,n}^G(X,\beta) \to \I X$, giving a diagram
\begin{equation*} \MM_{g,n}^G \leftarrow \MM_{g,n}^G(X,\beta) \to (\I X)^n, \end{equation*}
equivariant for the $\L G \wr \sym_n$-action on all three spaces. We can write this as a diagram of stable $\L G \wr \sym$-modules in $\stacks$:
\[ \M \stackrel \pi \leftarrow \M(X,\beta) \stackrel \ev \to \End(\I X). \]
Pushing forward the virtual fundamental class defines a morphism $\M \to \End(\I X)$ of $\L G \wr \sym$-modules in $\Corr$,
\[ (\ev \times \pi)_\ast [\M(X,\beta)]^\vir \in A^\bullet(\End(\I X) \times \M) .\]

\begin{thm}For any fixed $\beta \in H_2(X/G,\Z)$, the morphism just defined gives the inertia variety $\I X$ the structure of an algebra over $\M$ in $\Corr$. \end{thm}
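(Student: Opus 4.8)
The plan is to verify the two conditions in the definition of an algebra over the modular $\L G$-operad $\M$, where $\M$ is regarded as an operad in $\Corr$ via the covariant functor $\stacks\to\Corr$ which is the identity on objects and sends a morphism of (proper) stacks to its graph. The first condition, that $\I X$ is a symmetric functor $\L G\to\Corr$, was already established above using $X^g=X^{g^{-1}}$ and the motivic Poincar\'e duality pairing $\Delta_\ast[X^g]$ on $X^g$. For the second, the candidate morphism $\M\to\End(\I X)$ in $\Corr$ is given corolla by corolla by the Gromov--Witten correspondences
\[ \Phi(g,\gamma_1,\ldots,\gamma_n)\;=\;(\ev\times\pi)_\ast\,[\M(X,\beta)(\gamma)]^\vir\;\in\;A^\bullet\bigl(\textstyle\prod_i X^{\gamma_i}\times\M(g,\gamma_1,\ldots,\gamma_n)\bigr). \]
By the explicit description of a modular $\L G$-operad in Remark \ref{altdef}, being a morphism amounts to: (a) $\sym_n$-equivariance; (b) compatibility with the morphisms in each color of $\L G$; and (c) compatibility with the two families of gluing maps. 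Parts (a) and (b) are immediate, since $\M\stackrel{\pi}{\leftarrow}\M(X,\beta)\stackrel{\ev}{\to}\End(\I X)$ is a diagram of stable $\L G\wr\sym$-modules in $\stacks$, the $\L G\wr\sym_n$-action on all three is by isomorphisms that preserve the virtual class, and pushforward of cycle classes is functorial.

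The substance is (c), i.e.\ compatibility with contraction of a single edge of a stable $\L G$-graph $\Gamma$. As composition in $\Corr$ is a refined pushforward of a cup product, I would follow the classical strategy of \cite{behrendmanin, normalcone} and reduce to two geometric inputs. First, a fibre diagram: writing $\iota\colon\MM_\Gamma^G\to\MM_{g,n}^G$ for the gluing map attached to $\Gamma$ and $h$ for the conjugacy class of monodromies along the contracted edge, one identifies $\M(X,\beta)\times_{\M}\MM_\Gamma^G$ with the disjoint union, over degree splittings $\sum_v\beta_v=\beta$, of the fibre product of the spaces $\MM_{g_v,n_v}^G(X,\beta_v)$ obtained by forcing the two evaluation maps at the half-edges of the contracted edge, which land in $X^h$ and in $X^{h^{-1}}=X^h$, to agree. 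The point to verify is that this ``agreement at the node'' is precisely the diagonal $X^h\hookrightarrow X^h\times X^h$, and that after quotienting by the diagonal $G$-action of Section \ref{background} that forgets the two sections of $P$ over the node, one recovers on the nose the coend defining the gluing maps of both $\M$ and $\End(\I X)$ --- so the pairing $\Delta_\ast[X^h]$ built into $\End(\I X)$ is matched by the pairing encoded in $\M$ through condition (3) of Definition \ref{adm}. Second, a comparison of virtual classes: one must check that $[\M(X,\beta)\times_{\M}\MM_\Gamma^G]^\vir$ is the refined Gysin pullback of $[\M(X,\beta)]^\vir$ along $\iota$, and that it equals the external product of the $[\MM_{g_v,n_v}^G(X,\beta_v)]^\vir$ restricted along the diagonal of $X^h$. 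This is the equivariant splitting axiom; it follows, as in \cite{normalcone} and its orbifold counterpart \cite{agv08}, from the functoriality of virtual fundamental classes under the relevant Cartesian squares together with the local analysis of the tangent--obstruction complex at a node, whose normal bundle contributes the excess term matched by the diagonal. Here one uses that the obstruction theory is built from $\mathbf{R}\pi_\ast f^\ast T_X$ on $P$ and that taking $G$-invariants is exact, $|G|$ being invertible, so passage to invariants commutes with the gluing of obstruction theories.

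Granting these two inputs, the remaining point --- that $\Phi$ is genuinely a morphism of operads and not merely a compatible family of correspondences --- reduces to the same bookkeeping as in the non-equivariant case: for $\L G$-graphs with several edges the Cartesian squares compose and the virtual classes multiply, and the mild $2$-categorical issues are dealt with by the $2$-Yoneda lemma on moduli functors, exactly as in the proof that $\M$ is a modular $\L G$-operad in $\stacks$. I expect the genuine obstacle to be the first geometric input above: pinning down the boundary of $\MM_{g,n}^G(X,\beta)$ and checking that the interplay of the monodromy-inversion condition with the forgetting of the nodal sections reproduces exactly the loop-groupoid combinatorics and the pairing $\Delta_\ast[X^h]$ on the component $X^h\subset\I X$. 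Once those diagrams are correctly set up, the virtual-class comparison is a faithful transcription of the non-equivariant arguments.
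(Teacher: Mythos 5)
Your overall strategy is the paper's: reduce to the contraction of a single edge decorated by $g,g^{-1}$, unwind the composition in $\Corr$ into fibre products in $\stacks$, and settle the resulting comparison by the splitting/cutting-edges machinery of \cite{behrendmanin,normalcone} in its orbifold form \cite{agv08}. The gap is in your first geometric input. Write $A=\M(X,\beta)(\Gamma')\times_{\M(\Gamma')}\M(\Gamma)$ for the fibre product that actually appears when one unwinds the composition on the $\M$ side, and $B$ for the space you describe: the union over degree splittings of the fibre products of the $\MM_{g_v,n_v}^G(X,\beta_v)$ obtained by forcing the two evaluations at the half-edges to agree in $X^g$. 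You assert that these are identified, and you rest the matching of the operadic gluing with the pairing $\Delta_\ast[X^g]$ on that identification. But the identification fails: a point of $A$ is a stable map with admissible cover together with a splitting, prescribed by $\Gamma$, of the \emph{stabilization} of its domain, whereas a point of $B$ carries a splitting of the prestable domain (and of the cover) itself. When the node of the stabilization arises from a contracted chain of unstable components on which $f$ is nonconstant, the map does not split along that node at all. So there is only a natural morphism $h\colon B\to A$, which is an isomorphism merely on an open locus, and no space-level matching of ``agreement at the node'' with the diagonal is available.

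Consequently the statement one must actually prove is a cycle-level identity, namely $h_\ast\Delta^![\M(X,\beta)(\Gamma)]^\vir=\gl^![\M(X,\beta)(\Gamma')]^\vir$ in the Chow group of $A$ (equivalently after pushforward to $\End(\I X)(\Gamma')\times\M(\Gamma)$). Your second input, the virtual-class comparison, is morally this identity, but as written it presupposes the identification $A\cong B$ (``the external product \ldots restricted along the diagonal'' only lives on $B$, while the refined pullback $\gl^!$ lives on $A$). The needed inputs are exactly Behrend--Manin's axioms (III) ``cutting edges'' and (IV) ``isogenies''; in the admissible-cover setting they are obtained by combining \cite[Propositions 5.3.1, 5.3.2]{agv08} with the arguments of \cite[Proposition 8]{gwinag} and the computation following Lemma 10 there, carried over to prestable pointed curves equipped with admissible covers. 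With that correction --- replacing the claimed isomorphism by the morphism $h$ and the cycle identity --- your outline coincides with the paper's proof; the equivariance and $\L G$-functoriality parts (a) and (b) are indeed immediate, as you say.
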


\begin{proof}We need to show that for any morphism $\Gamma \to \Gamma'$ in $\S$, the diagram  
\[ \begin{diagram} 
\M(\Gamma') & \rTo & \End(\I X)(\Gamma') \\
\uTo & & \uTo \\ 
\M(\Gamma) & \rTo & \End(\I X) (\Gamma) 
\end{diagram}\]
in $\Corr$ commutes. We may assume that $\Gamma \to \Gamma'$ is given by a contracting a single edge, which is decorated by $g, g^{-1} \in G$. In this case we have 
\[ \End(\I X)(\Gamma) = \End(\I X)(\Gamma') \times X^g \times X^{g^{-1}}.\]
Unwinding the definition of composition in $\Corr$, we see that we must study the following diagram in $\stacks$:
\newarrow{Id}{=}{=}{=}{=}{=}

\begin{diagram}[labelstyle=\scriptstyle]
A & \rTo & \M(X,\beta)(\Gamma') & \rTo &  \End(\I X)(\Gamma') \\
\dTo & \square & \dTo & &  \\
\M(\Gamma) & \rTo_{\rm gl} & \M(\Gamma') & &  \uTo \\
 & & & &  \\
\dId & & \End(\I X ) (\Gamma)  & \lTo^{\id \times \Delta} & \End(\I X)(\Gamma') \times X^g \\ 
 & & \uTo & \square & \uTo \\ 
 \M(\Gamma) & \lTo & \M(X,\beta)(\Gamma) & \lTo & B.
\end{diagram}
Here $\Delta$ is the diagonal map $X^g \to X^g \times X^{g^{-1}} = X^g \times X^g$, and $\gl$ is the gluing map of the operad $\M$ in $\stacks$. The spaces $A$ and $B$ are defined by the requirement that the smaller squares are cartesian. What we need to show is that the pushforwards of $\gl^! [\M(X,\beta)(\Gamma')]^\vir$ and $\Delta^! [\M(X,\beta)(\Gamma)]^\vir$ to $A^\bullet(\End(\I X)(\Gamma') \times \M(\Gamma) )$ coincide. 

There is a natural morphism $h \colon B \to A$, which is not an isomorphism. Indeed, after unwinding the fiber products one finds that $B$ parametrizes all the same data as $\M(X,\beta)(\Gamma')$, together with a decomposition of the admissible cover $P \to C$ into two components whose genera and markings are determined by $\Gamma$. The stack $A$ parametrizes the same thing, except one only has a decomposition of the \emph{stabilization} of $P \to C$ into two components. However, one can show that $h$ is an isomorphism on an open set, and then prove that $h_\ast \Delta^! [\M(X,\beta)(\Gamma)]^\vir = \gl^! [\M(X,\beta)(\Gamma')]^\vir$, which proves the claim. What we need are exactly the properties (III) and (IV) in \cite{behrendmanin}, which they refer to as `cutting edges' and `isogenies'. These are not proven exactly in this form in \cite{agv08}, but they follow by combining \cite[Proposition 5.3.1, 5.3.2]{agv08} and the arguments of \cite[Proposition 8]{gwinag} and the calculation immediately following Lemma 10 in loc.\ cit., which generalize from pre-stable pointed curves to pre-stable pointed curves with an admissible cover. \end{proof}

\begin{defn}We define $\Theta_X$ to be the usual Novikov ring of $X/G$, i.e. the ring of formal power series in the variables $q^\beta$, where $\beta \in H_2(X/G,\Z)$ is the class of a curve, and $q^{\beta}q^{\beta'} = q^{\beta+\beta'}$. \end{defn}

\begin{defn}Let $\Corr \otimes \Theta_X$ be the category obtained by tensoring all hom-spaces in $\Corr$ with $\Theta_X$.  \end{defn}

We define a morphism $\phi \colon \M \to \End (\I X)$ in $\Corr \otimes \Theta_X$ by 
\[ \sum_{\beta }(\ev \times \pi)_\ast [\M(X,\beta)]^\vir q^\beta \in A^\bullet(\End(\I X) \times \M)\otimes \Theta_X. \]

\begin{thm}With these maps, $\I X$ is an algebra over $\M$ in $\Corr \otimes \Theta_X$. \end{thm}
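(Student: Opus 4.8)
The plan is to reduce this statement to the previous theorem by observing that passing from a fixed class $\beta$ to the Novikov-ring-valued sum is a purely formal manipulation. First I would recall that a morphism $\M \to \End(\I X)$ in $\Corr \otimes \Theta_X$ is, by definition, an element of $A^\bullet(\End(\I X) \times \M) \otimes \Theta_X$, i.e.\ a $\Theta_X$-linear combination $\sum_\beta c_\beta q^\beta$ of correspondences $c_\beta \in A^\bullet(\End(\I X)\times \M)$. The algebra axioms from Remark \ref{altdef}, applied in $\Corr \otimes \Theta_X$, say that for every morphism $\Gamma \to \Gamma'$ in $\S$ a certain square commutes; since composition in $\Corr \otimes \Theta_X$ is $\Theta_X$-bilinear and the $q^\beta$ are algebraically independent, this square commutes if and only if it commutes ``coefficient by coefficient'' in the appropriate sense. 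So the whole statement unwinds to a collection of identities in $A^\bullet(\End(\I X)(\Gamma')\times \M(\Gamma)) \otimes \Theta_X$, one for each edge contraction.

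Next I would carry out the bookkeeping of what ``coefficient by coefficient'' means here. Because the definition of $\M(X,\beta)(\Gamma)$ already involves a sum $\coprod_{\beta_1+\ldots+\beta_n = \beta}$ over decompositions of $\beta$ along the vertices, the gluing map along an edge of $\Gamma$ combines two components carrying classes $\beta'$ and $\beta''$ into one carrying $\beta'+\beta''$. Thus, writing $\phi = \sum_\beta \phi_\beta q^\beta$ with $\phi_\beta = (\ev \times \pi)_\ast[\M(X,\beta)]^\vir$, the compatibility of $\phi$ with a single edge contraction $\Gamma \to \Gamma'$ amounts to: for each $\beta$, the $q^\beta$-coefficient of the composite $\M(\Gamma) \to \M(\Gamma') \to \End(\I X)(\Gamma')$ equals the $q^\beta$-coefficient of $\M(\Gamma) \to \End(\I X)(\Gamma) \to \End(\I X)(\Gamma')$. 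But matching $q^\beta$-coefficients is exactly the content of the preceding theorem applied with that fixed $\beta$ --- modulo the fact that there the gluing was stated for the modular operad $\M$ without the extra $\beta$-decomposition, whereas here the source module $\M(X,\beta)$ carries its own internal coproduct over $\beta_1 + \beta_2 = \beta$. This is harmless: the diagram in the proof of the previous theorem was established at the level of each pair $(\beta_1,\beta_2)$, and summing over $\beta_1 + \beta_2 = \beta$ gives the $q^\beta$-coefficient identity.

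So the proof itself is short: I would say that a morphism in $\Corr \otimes \Theta_X$ is a $\Theta_X$-linear combination of morphisms in $\Corr$, that the operad axioms are $\Theta_X$-bilinear conditions, hence hold for $\phi$ if and only if they hold for each homogeneous piece $\phi_\beta$ after summing over the relevant partitions of $\beta$ into classes carried by the two glued components, and that this is precisely the previous theorem. The only point requiring a word of care --- and the closest thing to an obstacle --- is the interaction between the Novikov variable and the gluing map: one must check that the gluing map $\gl$ on $\M(X,-)$ really does add the curve classes of the two pieces, so that the coefficient extraction is compatible with composition. This is immediate from the defining fibre-product description $\M_{g,n}^G(X,\beta) = \M_{g,n}([X/G],\beta)\times_{\M_{g,n}(BG)} \M_{g,n}^G$ and the corresponding well-known additivity for stable maps to $[X/G]$, which was already used implicitly in the previous theorem. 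I would therefore present the argument as a one-paragraph deduction: ``By $\Theta_X$-bilinearity of composition in $\Corr\otimes\Theta_X$, it suffices to check the operad axioms coefficientwise in the $q^\beta$; since $\gl$ adds curve classes, the $q^\beta$-coefficient of each axiom is the sum over $\beta_1+\beta_2=\beta$ of the identities proved in Theorem~\ref{...}, and hence holds.''
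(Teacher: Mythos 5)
Your proposal is correct and follows the same route as the paper, whose entire proof of this theorem is the one-line observation that it is clear from the preceding theorem. Your elaboration --- $\Theta_X$-bilinearity, coefficientwise extraction in the $q^\beta$, and the fact that the fixed-$\beta$ statement already incorporates the sum over decompositions $\beta_1+\ldots+\beta_n=\beta$ via the definition of $\M(X,\beta)(\Gamma)$ --- is exactly the bookkeeping the paper leaves implicit.
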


\begin{proof}This is clear from the preceding theorem.\end{proof}

The category $\Corr$ is equipped with realization functors associated to (Weil) cohomology theories; similarly, the category $\Corr \otimes \Theta_X$ has functors $Y \mapsto H^\bullet(Y,\Theta_X)$ by the universal coefficients theorem. The cohomology of $\I X$ is exactly Fantechi and G\"ottsche's ring $H^\bullet(X,G)$. Applying $H^\bullet$ to the morphism $\M \to \End \I X$, one finds the following result:

\begin{thm}Let $X$ be a smooth projective variety with an action of the finite group $G$. Then the stringy cohomology ring $H^\bullet(X,G)$, taken with coefficients in the Novikov ring of $X$, is in a canonical way a $G$-CohFT. \end{thm}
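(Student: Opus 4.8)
The plan is to deduce the final theorem as a formal consequence of the penultimate one, by transporting the algebra structure along a realization functor and identifying the output with the known axioms of a $G$-CohFT. Concretely, the previous theorem produces an algebra structure on $\I X$ over the modular $\L G$-operad $\M$ in the category $\Corr \otimes \Theta_X$, i.e.\ a weak symmetric functor $\I X \colon \L G \to \Corr \otimes \Theta_X$ together with a morphism of modular $\L G$-operads $\phi \colon \M \to \End(\I X)$. First I would invoke the fact, recorded just before the statement, that any Weil cohomology theory gives a symmetric monoidal realization functor $H^\bullet \colon \Corr \to (\text{graded vector spaces})$, and that this upgrades (via universal coefficients) to $H^\bullet(-,\Theta_X) \colon \Corr \otimes \Theta_X \to \Theta_X\text{-Mod}$. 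Since this functor is symmetric monoidal and preserves duality, it carries weak symmetric functors to weak symmetric functors and modular $\L G$-operads to modular $\L G$-operads, hence carries the algebra $(\I X, \phi)$ to an algebra over $H^\bullet(\M,\Theta_X)$ on the vector space $H^\bullet(\I X,\Theta_X)$. By the identification in the preceding remark, $H^\bullet(\I X) = \bigoplus_{g\in G} H^\bullet(X^g)$ is precisely the Fantechi--G\"ottsche ring $H^\bullet(X,G)$, so we obtain an algebra structure on $H^\bullet(X,G) \otimes \Theta_X$ over $H^\bullet(\M,\Theta_X)$, equivalently (dualizing) an algebra over $H_\bullet(\M,\Q)$ in the appropriate sense.

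Next I would match this with the definition of a $G$-CohFT. By the Proposition proved earlier in this section, an algebra over $H_\bullet(\M,\Q)$ in finite-dimensional vector spaces is the same thing as a non-unital $G$-CohFT; with $\Theta_X$-coefficients this is verbatim the same argument, giving a non-unital $G$-CohFT structure on $H^\bullet(X,G)\otimes \Theta_X$. To promote ``non-unital'' to an honest $G$-CohFT (axioms (iii) and (iv) of \cite{jkk}) one must exhibit the flat identity. This is the standard point: the class $\mathbf 1 \in H^\bullet(X^e) = H^\bullet(X)$ serves as the identity, and the required flatness (the $\MM_{0,3}^G$ and $\MM_{1,1}^G$ compatibilities with insertion of $\mathbf 1$, and the string/dilaton-type relations coming from forgetting a marking with trivial monodromy) follows exactly as in the ordinary CohFT case, using that the evaluation map at a marking carrying trivial monodromy lands in $X^e = X$ and that the relevant forgetful morphisms $\MM_{g,n+1}^G \to \MM_{g,n}^G$ (forgetting a marking and its lifted point, with trivial monodromy) are compatible with the virtual classes by the axioms (III), (IV) of \cite{behrendmanin} invoked in the previous proof. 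In other words the unitality is inherited from the unitality of ordinary Gromov--Witten theory of $[X/G]$ via the fibre product description $\MM_{g,n}^G(X,\beta) = \MM_{g,n}([X/G],\beta)\times_{\MM_{g,n}(BG)}\MM_{g,n}^G$.

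I would then remark that the resulting $G$-CohFT is canonical: it depends only on the data $(X,G)$ and the choices already fixed (the Weil cohomology theory, which can be taken to be singular cohomology over $\Q$), since every construction used — the virtual class, the evaluation maps, the realization functor — is canonical. This gives the ``in a canonical way'' clause of the statement.

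The main obstacle is the last step, namely checking that the flat identity axioms hold, since this is the one piece of structure genuinely absent from the operadic formalism developed here (the operad $\M$ has no preferred element encoding a unit, as emphasised in the text). Everything else is a mechanical transport along a symmetric monoidal functor, but the unitality requires the geometric input that the orbifold Gromov--Witten theory of $[X/G]$ has a fundamental-class/string-equation behaviour compatible with the admissible-cover gluing; this is precisely where one leans on \cite{agv08} and the ``cutting edges'' and ``isogenies'' properties already used in the proof of the previous theorem, now specialised to a forgotten marking with trivial monodromy. I expect no further difficulties beyond bookkeeping once that is granted.
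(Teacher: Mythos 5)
Your proposal is correct and follows essentially the same route as the paper: apply the realization functor $H^\bullet(-,\Theta_X)$ on $\Corr\otimes\Theta_X$ to the algebra structure $\phi\colon \M\to\End(\I X)$ from the preceding theorem, identify $H^\bullet(\I X)$ with the Fantechi--G\"ottsche ring, and invoke the earlier Proposition matching algebras over $H_\bullet(\M)$ with $G$-CohFTs. The only difference is that you additionally verify the flat identity axioms (iii)--(iv) of \cite{jkk} via the fundamental-class/string behaviour of the Gromov--Witten theory of $[X/G]$, a point the paper leaves implicit (its operadic statement is really about the non-unital structure); your sketch of that step is reasonable, though it is extra work beyond what the paper records.
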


\begin{rem}In the above statement, we consider $H^\bullet(X,G)$ just as a super vector space, but one can with some care introduce a grading compatible with the algebra. To do this, one needs to introduce a grading on $\Theta$ via $\deg(q^\beta) = -2c_1[X/G]\cap \beta$, and equip $H^\bullet(X,G)$ with the so-called age grading. We omit the details as this is well known. \end{rem}

The above theorem was announced in \cite{jkk}, but a proof has not appeared. Although it is certainly possible to prove this without the language of operads, the author believes that the operadic framework has simplified the proof.

\renewcommand{\M}{\mathbb{M}}



\bibliographystyle{alpha}

\bibliography{../database}


\end{document}